\numberwithin{equation}{section}
\numberwithin{figure}{section}
\theoremstyle{plain}
\newtheorem{theorem}{\protect\theoremname}[section]
  \theoremstyle{plain}
  \newtheorem{lemma}{\protect\lemmaname}[section]
\newtheorem{remark}{Remark}[section]
\def\d{{{\rm d}%
}v_{g}}
\def\ds{\displaystyle}
\newcommand{\eps}{{\varepsilon}}
  \providecommand{\lemmaname}{Lemma}
\providecommand{\theoremname}{Theorem}
\newenvironment{customthm}[1]
{\innercustomthm}
{\endinnercustomthm}
\begin{document}

\title{Schrödinger-Maxwell systems on compact Riemannian manifolds}
\author{Csaba Farkas}

\email{farkas.csaba2008@gmail.com \& farkascs@ms.sapientia.ro}

\address{Department of Mathematics and Computer Science, Sapientia University,
	Tg. Mures, Romania \& Institute of Applied Mathematics, Óbuda University,
	1034 Budapest, Hungary}
\maketitle
\begin{abstract}
	In this paper we are focusing to the following Schr\"odinger-Maxwell system:
	\[
	\ \left\{ \begin{array}{lll}
	-\Delta_{g}u+\beta(x)u+eu\phi=\Psi(\lambda,x)f(u) & \mbox{in} & M,\\
	-\Delta_{g}\phi+\phi=qu^{2} & \mbox{in} & M,
	\end{array}\right.\eqno{(\mathcal{SM}_{\Psi(\lambda,\cdot)}^{e})}
	\]
	where $(M,g)$ is a 3-dimensional compact Riemannian manifold without
	boundary, $e,q>0$ are positive numbers, $f:\mathbb{R}\to\mathbb{R}$
	is a continuous function, $\beta\in C^{\infty}(M)$ and $\Psi\in C^{\infty}(\mathbb{R}_{+}\times M)$
	are positive functions. By various variational approaches, existence of multiple solutions of the problem $(\mathcal{SM}_{\Psi(\lambda,\cdot)}^{e})$ is established.
\end{abstract}
%{\Large \bf \textcolor{red}{MEG HIANYZIK HIVATKOZAS}}
\section{Introduction and statement of the main results}
We are concerned with the nonlinear  Schr\"odinger-Maxwell system
\[
\ \left\{ \begin{array}{lll}
-\Delta_{g}u+\beta(x)u+eu\phi=\Psi(\lambda,x)f(u) & \mbox{in} & M,\\
-\Delta_{g}\phi+\phi=qu^{2} & \mbox{in} & M,
\end{array}\right.\eqno{(\mathcal{SM}_{\Psi(\lambda,\cdot)}^{e})}
\]
where $(M,g)$ is a 3-dimensional compact Riemannian manifold without
boundary, $e,q>0$ are positive numbers, $f:\mathbb{R}\to\mathbb{R}$
is a continuous function, $\beta\in C^{\infty}(M)$ and $\Psi\in C^{\infty}(\mathbb{R}_{+}\times M)$
are positive functions. 

%The solutions $(u,\phi)$ of $(\mathcal{SM}^e_{\lambda})$
%are sought in the Sobolev space $H_{g}^{1}(M)\times H_{g}^{1}(M)$. 

%In the last decades, the Schr\"odinger-Maxwell/Poisson or Klein-Gordon systems has been object of intensive research because of its strong relevance in applications, see \cite{BF} and reference therein. 
From physical point of view, the Schr\"odinger-Maxwell systems 
\begin{equation}\label{eredeti}
\ \left\{
\begin{array}{lll}
-\frac{\hbar^2}{2m}\Delta u+\omega u+e u\phi=f(x,u) & \mbox{in} &
\mathbb{R}^3 , \\
-\Delta\phi=4\pi e u^2 & \mbox{in} & \mathbb{R}^3,
\end{array}%
\right.
\end{equation}
describe the statical behavior of a charged non-relativistic
quantum mechanical particle interacting with the electromagnetic
field. More precisely, the unknown terms $u : \mathbb{R}^3
\to\mathbb{R}$ and $\phi : \mathbb{R}^3 \to \mathbb{R}$ are the
fields associated to the particle and the electric potential,
respectively, the nonlinear term $f$ models the interaction between the
particles and the coupled term $\phi u$ concerns the interaction with the electric field. Note that the quantities $m$, $e$,
$\omega$ and $\hbar$ are the mass, charge, phase, and Planck's
constant. 

In fact, system (\ref{eredeti}) comes from the
evolutionary nonlinear Schr\"odinger equation by using a
Lyapunov-Schmidt reduction.

The Schr\"odinger-Maxwell system (or its
variants) has been the object of various investigations in the last
two decades, the existence/non-existence of positive
solutions, sign-changing solutions, ground states, radial and non-radial solutions
and semi-classical states has been investigated by several authors. Without sake of completeness, we recall in the sequel some important contributions to the study of system (\ref{eredeti}). Benci and Fortunato \cite{BF} considered the case of
$f(x,s)=|s|^{p-2}s$ with $p\in (4,6)$ by proving the existence of
infinitely many radial solutions for (\ref{eredeti}); their main
step relies on the reduction of system (\ref{eredeti}) to the
investigation of critical points of a "one-variable" energy
functional associated with (\ref{eredeti}). 

Based on the idea of
Benci and Fortunato, under various growth assumptions on $f$ further
existence/multiplicity results can be found in Ambrosetti and Ruiz
\cite{Ambrosetti-Ruiz}, Azzolini
\cite{azzolini1}, in \cite{azzolini2} Azzollini, d'Avenia and Pomponio  were concerned with the existence of a positive radial solution to system \eqref{eredeti} under the effect of a general nonlinear term, in \cite{davenia} the existence of a non radially symmetric solution was established when $p\in (4,6)$, by means of a Pohozaev-type identity, d'Aprile and Mugnai \cite{mugani2,mugnai} proved the non-existence of non-trivial solutions to  system (\ref{eredeti}) whenever $f\equiv 0$ or $f(x,s)=|s|^{p-2}s$  and $p\in (0,2]\cup [6,\infty)$, the same authors proved the existence of a non-trivial radial solution to \eqref{eredeti}, for $p\in [4.6)$. Other existence and multiplicity result can be found in the works of Cerami and Vaira \cite{cerami}, Krist\'aly and Repovs \cite{kristalyrepovs}, Ruiz
\cite{ruiz}, Sun, Chen and Nieto \cite{SCN}, Wang and Zhou
\cite{wang}, and references therein.

%In recent years considerable efforts have been done to describe
%various nonlinear phenomena in {\it curves spaces} (which are mainly
%understood in linear structures), e.g.  optimal mass transportation
%on metric measure spaces, geometric functional inequalities and
%optimization problems on Riemannian/Finsler manifolds, etc. In
%particular, this research stream  reached as well the study of
%Schr\"odinger-Maxwell systems. 
In the last five years Schr\"odinger-Maxwell systems has been studied on $n-$dimensional {\it compact or non-compact Rieman\-nian mani\-folds} ($2\leq n \leq 5$) by Druet and Hebey \cite{DH}, Farkas and Krist\'aly  \cite{FK16}, Hebey and Wei \cite{HW}, Ghimenti and Micheletti \cite{GHM, GHM2} and Thizy \cite{Thizy-1, Thizy-2}. More precisely, in the aforementioned papers various forms of the system
\begin{equation}\label{eredeti-2}
\ \left\{
\begin{array}{lll}
-\frac{\hbar^2}{2m}\Delta u+\omega u+e u\phi=f(x,u) & \mbox{in} &
M , \\
-\Delta_g\phi+\phi=4\pi e u^2 & \mbox{in} & M,
\end{array}%
\right.
\end{equation}
has been considered, where $(M,g)$ is a Riemannian manifold.
%As expected, the compactness  of
%$(M,g)$ played a crucial role in these investigations.

The aim of this paper is threefold. First we consider the system $(\mathcal{SM}_{\Psi(\lambda,\cdot)}^{e})$ with $\Psi(\lambda,x)=\lambda \alpha(x)$, where $\alpha$ is a suitable function and we assume that $f$ is a sublinear nonlinearity (see the assumptions $(f_1)-(f_3)$ below). In this case we prove that if the parameter $\lambda$ is small enough the system $(\mathcal{SM}_{\lambda}^{e})$ has only the trivial solution, while if $\lambda$ is large enough then the system $(\mathcal{SM}_{\Psi(\lambda,\cdot)}^{e})$ has at least two solutions, see Theorem \ref{thm:sublinear1}. It is natural to ask what happens between this two threshold values. In this gap interval we have no information on the number of solutions $(\mathcal{SM}_{\Psi(\lambda,\cdot)}^{e})$; in the case when $q\to 0$ these two threshold values may be arbitrary close to each other. Similar bifurcation type result for a perturbed sublinear elliptic problem was obtained by Krist\'aly, see \cite{Kristaly-JMAA12}.

Second, we consider the system $(\mathcal{SM}_{\Psi(\lambda,\cdot)}^{\lambda})$ with $\Psi(\lambda,x)=\lambda\alpha(x)+\mu_{0}\beta(x),$ where $\alpha$ and $\beta$ are suitable functions. In order to prove a new kind of multiplicity for the system $(\mathcal{SM}_{\Psi(\lambda,\cdot)}^{\lambda})$ (i.e., $e=\lambda$), we show that certain properties of the nonlinearity, concerning the set of all global minima's, can be reflected to the energy functional associated to the problem, see Theorem \ref{thm:m-megoldas}. 

Third, as a counterpart of Theorem \ref{thm:sublinear1} we will consider the system $(\mathcal{SM}_{\Psi(\lambda,\cdot)}^{e})$ with $\Psi(\lambda,x)=\lambda$, and $f$ here satisfies the so called Ambrosetti-Rabinowitz condition. This type of result is motivated by the result of Anello \cite{Anello} and Ricceri \cite{Ricceri2}, where the authors studied the classical Ambrosetti - Rabinowitz problem, without the assumption $\displaystyle \lim_{t\to0}\frac{f(t)}{t}=0,$ i.e., the authors proved that if the nonlinearity $f$ satisfies the so called (AR) condition and a subcritical growth condition, then if $\lambda$ is small enough the problem 
\[
\ \left\{ \begin{array}{lll}
-\Delta u=\lambda f(u) & \mbox{in} & \Omega,\\
u=0& \mbox{on} & \partial \Omega,
\end{array}\right.
\]
has at least two weak solutions in $H^1_0(\Omega)$.

In the sequel we present precisely our results.
As we mentioned before, we first consider a continuous function $f:[0,\infty)\to\mathbb{R}$
which verifies the following assumptions: 
\begin{itemize}
	\item[$(f_{1})$] \label{f1-felt} $\frac{f(s)}{s}\to0$ as $s\to0^{+}$; 
	\item[$(f_{2})$] \label{f2-felt} $\frac{f(s)}{s}\to0$ as $s\to\infty$; 
	\item[$(f_{3})$] \label{f3-felt} $F(s_{0})>0$ for some $s_{0}>0$, where $\ds F(s)=\int_{0}^{s}f(t){\rm d}t,$
	$s\geq0.$ 
\end{itemize}
Due to the assumptions $(f_{1})-(f_{3})$, the numbers $$\ds c_{f}=\max_{s>{0}}\frac{f(s)}{s}$$ and $$c_F=\max_{s>0}\frac{4F(s)}{2s^2+eqs^4}$$
are well-defined and positive. Now, we are in the position to state
the first result of the paper. In order to do this, first we recall the definition of the weak solutions of the problem $(\mathcal{SM}_{\lambda}^{e})$:
The pair $(u,\phi)\in H_g^1(M)\times H^1_g(M)$ is a {\it  weak
	solution} to the system $(\mathcal{SM}^e_\lambda)$ if
\begin{equation}\label{schrodingerweak}
\int_M (\langle\nabla_g u , \nabla_g v\rangle+\beta(x)uv+e u \phi v )\d=
\int_M \Psi(\lambda,x)f(u)v\d \hbox{ for all } v\in H^1_g(M),
\end{equation}
\begin{equation}\label{maxwellweak}
\int_M (\langle\nabla_g \phi, \nabla_g \psi\rangle +\phi \psi
)\d=q\int_M u^2 \psi\d \hbox{ for all }\psi \in H^1_g(M).
\end{equation}

Our first result reads as follows:
\begin{theorem}
	\label{thm:sublinear1}Let $(M,g)$ be a $3-$dimensional compact
	Riemannian manifold without boundary, and let $\beta \equiv 1$. Assume that $\Psi(\lambda,x)=\lambda\alpha(x)$
	and $\alpha\in C^{\infty}(M)$ is a positive function. If the continuous
	function $f:[0,\infty)\to\mathbb{R}$ satisfies assumptions \hyperref[f1-felt]{$(f_{1})$}-\hyperref[f3-felt]{$(f_{3})$},
	then 
	\begin{itemize}
		\item[\emph{(a)}]  if ${\displaystyle 0\leq\lambda<c_{f}^{-1}\left\Vert {\alpha}\right\Vert _{L^{\infty}}^{-1}}$,
		system \textup{$(\mathcal{SM}_{\Psi(\lambda,\cdot)}^{\lambda})$} has only the trivial
		solution;
		\item[\textit{\emph{(b)}}]  for every ${\displaystyle \lambda\geq c_{F}^{-1}\left\Vert \alpha\right\Vert_{L^1}^{-1}}$,
		system $(\mathcal{SM}_{\Psi(\lambda,\cdot)}^{\lambda})$ has at least two distinct non-zero,
		non-negative weak solutions in $H_{g}^{1}(M)\times H_{g}^{1}(M)$. 
	\end{itemize}
\end{theorem}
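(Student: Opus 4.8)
For this direction I would argue directly, with no variational machinery. Let $(u,\phi)\in H^1_g(M)\times H^1_g(M)$ be a weak solution. Choosing $\psi=\phi^-=\min\{\phi,0\}$ in \eqref{maxwellweak} gives $\phi\ge 0$ a.e., so $\int_M\phi u^2\d\ge 0$. Taking $v=u$ in \eqref{schrodingerweak}, using $\beta\equiv 1$ and the bound $f(s)s\le c_f s^2$ (extend $f$ by $0$ on $(-\infty,0)$ if $u$ changes sign; $(f_1)$ forces $f(0)=0$),
\[
\|u\|_{H^1_g}^2\ \le\ \|u\|_{H^1_g}^2+e\int_M\phi u^2\d\ =\ \lambda\int_M\alpha(x)f(u)u\,\d\ \le\ \lambda\,\|\alpha\|_{L^\infty}\,c_f\int_M u^2\,\d\ \le\ \lambda\,\|\alpha\|_{L^\infty}\,c_f\,\|u\|_{H^1_g}^2 .
\]
If $\lambda<c_f^{-1}\|\alpha\|_{L^\infty}^{-1}$ this forces $u\equiv 0$, and then $\phi=\phi_0=0$ from \eqref{maxwellweak}; so the only weak solution is trivial.

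\textbf{Part (b).} Here I would use the classical reduction. For every $u\in H^1_g(M)$ one has $u^2\in L^{6/5}(M)\hookrightarrow (H^1_g(M))^{\ast}$ (Sobolev embedding in dimension $3$), so the Maxwell equation has a unique solution $\phi_u\in H^1_g(M)$, and $\phi_u\ge 0$, $\phi_{tu}=t^2\phi_u$, $\|\phi_u\|_{H^1_g}\le C\|u\|_{H^1_g}^2$, $\int_M\phi_u u^2\,\d=q^{-1}\|\phi_u\|_{H^1_g}^2\ge 0$, the map $u\mapsto\phi_u$ is of class $C^1$, and $u\mapsto\int_M\phi_u u^2\,\d$ is sequentially weakly continuous (if $u_n\rightharpoonup u$ in $H^1_g(M)$ then $u_n^2\to u^2$ in $(H^1_g(M))^{\ast}$, hence $\phi_{u_n}\to\phi_u$ strongly). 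Substituting $\phi=\phi_u$ into \eqref{schrodingerweak}, the weak solutions of the system are precisely the critical points of
\[
\mathcal E_\lambda(u)=\frac12\|u\|_{H^1_g}^2+\frac e4\int_M\phi_u u^2\,\d-\lambda\int_M\alpha(x)\widetilde F(u)\,\d ,\qquad u\in H^1_g(M),
\]
where $\widetilde f(s)=f(s)$ for $s\ge 0$, $\widetilde f(s)=0$ for $s<0$ and $\widetilde F(s)=\int_0^s\widetilde f$. Testing a critical point of $\mathcal E_\lambda$ with $u^-$ and using $\phi_u\ge 0$ shows $u\ge 0$, so it solves the original system with $f$; and $u\ne 0\Rightarrow\phi_u\ne 0$. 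Hence it suffices to produce two nonzero critical points of $\mathcal E_\lambda$.

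\textbf{First critical point (global minimum).} This is where $(f_2)$ enters: for each $\varepsilon>0$ there is $C_\varepsilon$ with $|\widetilde F(s)|\le\varepsilon s^2+C_\varepsilon$ for all $s$ (split according to whether $|f(s)|\le\varepsilon s$ or not). Choosing $\varepsilon$ so small that $\lambda\|\alpha\|_{L^\infty}\varepsilon<\tfrac14$ and discarding the nonnegative coupling term yields $\mathcal E_\lambda(u)\ge\tfrac14\|u\|_{H^1_g}^2-\mathrm{const}$; thus $\mathcal E_\lambda$ is coercive \emph{for every} $\lambda$, which is exactly where sublinearity at infinity absorbs the arbitrarily large factor $\lambda$. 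Since $\mathcal E_\lambda$ is also sequentially weakly lower semicontinuous (the quadratic term is w.l.s.c., the other two terms are sequentially weakly continuous by compact Sobolev embeddings), it attains its infimum at some $u_1\in H^1_g(M)$, and replacing $u_1$ by $u_1^+$ does not raise $\mathcal E_\lambda$ (because $(u_1^+)^2\le u_1^2$, $\phi_{u_1^+}\le\phi_{u_1}$ and $\widetilde F(u_1^+)=\widetilde F(u_1)$ pointwise), so $u_1\ge 0$. Finally $u_1\ne 0$ once $\inf\mathcal E_\lambda<0$: evaluating $\mathcal E_\lambda$ at the constant function $s^{\ast}$ realizing $c_F$ — for which $\phi_{s^{\ast}}\equiv q(s^{\ast})^2$ and everything is explicit — produces, by $(f_3)$, a value proportional to $1-\lambda c_F\|\alpha\|_{L^1}$ (up to $\mathrm{Vol}_g(M)$), which is negative in the stated range of $\lambda$. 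So $(u_1,\phi_{u_1})$ is a nonzero, nonnegative weak solution with $\mathcal E_\lambda(u_1)<0$.

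\textbf{Second critical point (mountain pass), and the main obstacle.} By $(f_1)$, $\widetilde F(s)=o(s^2)$ as $s\to 0$, and combined with a linear growth bound $|\widetilde f(s)|\le c|s|$ (valid since $f(s)/s$ is bounded on $(0,\infty)$ by $(f_1)$--$(f_2)$ and continuity) this gives $|\widetilde F(s)|\le\varepsilon s^2+C_\varepsilon|s|^q$ for a fixed $q\in(2,6)$; Sobolev embedding then yields, for $\varepsilon$ small, $\mathcal E_\lambda(u)\ge\tfrac14\|u\|_{H^1_g}^2-C\|u\|_{H^1_g}^q$, so there exist $\rho,\eta>0$ with $\mathcal E_\lambda\ge 0$ on $\{\|u\|_{H^1_g}\le\rho\}$ and $\mathcal E_\lambda\ge\eta$ on $\{\|u\|_{H^1_g}=\rho\}$; in particular $\|u_1\|_{H^1_g}>\rho$. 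Coercivity makes every Palais--Smale sequence bounded, and the compactness of the lower-order and coupling terms upgrades weak convergence to strong along a subsequence, so $\mathcal E_\lambda$ satisfies the (PS) condition. The Mountain Pass Theorem applied to the two ``valleys'' $0$ and $u_1$ gives a critical point $u_2$ with $\mathcal E_\lambda(u_2)\ge\eta>0>\mathcal E_\lambda(u_1)=\inf\mathcal E_\lambda$, hence $u_2\notin\{0,u_1\}$, $u_2\ge 0$, $u_2\ne 0$, and $(u_2,\phi_{u_2})$ is a second solution; this proves (b). The bulk of the work — and the likely main obstacle — is the reduction lemma for $u\mapsto\phi_u$ (its $C^1$ regularity, the identity $\int_M\phi_u u^2\,\d=q^{-1}\|\phi_u\|_{H^1_g}^2$, and the weak-continuity/compactness statements needed for weak lower semicontinuity and for (PS)); conceptually, the only delicate analytic point is the coercivity step, in which $(f_2)$ must neutralise the arbitrarily large $\lambda$, while $(f_3)$ and the largeness of $\lambda$ enter only through the sign of $\mathcal E_\lambda$ at a single test function.
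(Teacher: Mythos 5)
Your proposal is correct and follows essentially the same route as the paper: direct testing with $v=u$ for part (a), and for part (b) the Benci--Fortunato reduction to the one-variable functional $\mathcal{E}_\lambda$, coercivity from $(f_2)$, a negative-energy global minimizer detected by testing at a constant function (which is exactly where $c_F$ and $\|\alpha\|_{L^1}$ enter), and a mountain-pass second solution from the $(f_1)$-geometry near the origin. The only differences are cosmetic: you evaluate at the extremal constant directly rather than via the paper's threshold $\lambda_0=\inf \mathscr{H}/\mathcal{F}$, and you spell out the nonnegativity of $\phi_u$ and of the solutions, which the paper leaves implicit.
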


\begin{remark}\rm
	\begin{itemize}
		\item[(a)]Due to \hyperref[f1-felt]{$(f_{1})$}, it is clear that $f(0)=0$, thus we can extend
		continuously the function $f:[0,\infty)\to \mathbb R$ to
		the whole $\mathbb R$ by $f(s)=0$ for $s\leq 0;$ thus,  $F(s)=0$ for
		$s\leq 0$.
		
		\item[(b)]  \hyperref[f1-felt]{$(f_{1})$} and \hyperref[f2-felt]{$(f_{2})$} mean that $f$ is superlinear at the origin
		and sublinear at infinity, respectively. Typical functions which fulfill hypotheses \hyperref[f1-felt]{$(f_{1})$}-\hyperref[f3-felt]{$(f_{3})$} are $$f(s)=\min\left(s^r,s^p\right), \ 0<r<1<p,\ \ s\geq 0$$ or $$f(s)=\ln(1+s^2), \ s\geq 0.$$
		\item[(c)] By a three critical points result of Ricceri \cite{Ricceri}, one can prove
		that the number of solutions of the problem $(\mathcal{SM}_{\Psi(\lambda,\cdot)}^{e})$ for $\lambda>\tilde\lambda$  is stable under
		small nonlinear perturbations $g:\mathbb R\to \mathbb R$ of
		subcritical type, i.e., $g(s)=o(|s|^{2^*-1})$ as $|s|\to \infty$,
		$2^*=\frac{2N}{N-2}, \ N>2.$ 
	\end{itemize}
\end{remark}
In order to obtain new kind of multiplicity result for the system
$(\mathcal{SM}_{\Psi(\lambda,\cdot)}^{\lambda})$ (with the choice $e=\lambda$), instead of the assumption \hyperref[f1-felt]{$(f_{1})$}
we require the following one:
\begin{itemize}
	\item[$(f_{4})$]  \label{f4-felt} There exists $\mu_{0}>0$ such that the set of all global minima
	of the function 
	\[
	{\displaystyle t\mapsto\Phi_{\mu_{0}}(t):=\frac{1}{2}t^{2}-\mu_{0}F(t)}
	\]
	has at least $m\geq2$ connected components. 
\end{itemize}
In this case we can state the following result. 
\begin{theorem}
	\label{thm:m-megoldas}Let $(M,g)$ be a $3-$dimensional compact
	Riemannian manifold without boundary. Let $f:[0,\infty)\to\mathbb{R}$
	be a continuous function which satisfies \hyperref[f2-felt]{$(f_{2})$} and \hyperref[f4-felt]{$(f_{4})$},
	$\beta\in C^{\infty}(M)$ is a positive function. Assume that $\Psi(\lambda,x)=\lambda\alpha(x)+\mu_{0}\beta(x),$
	where $\alpha\in C^{\infty}(M)$ is a positive function. Then for
	every $\ds \tau>\|\beta\|_{L^1(M)}\inf_{t}\Phi_{\mu_{0}}(t)$ there
	exists $\lambda_{\tau}>0$ such that for every $\lambda\in(0,\lambda_{\tau})$
	the problem $(\mathcal{SM}_{\Psi(\lambda,\cdot)}^{\lambda})$ has at least $m+1$ weak solutions, $m$ of
	which satisfy the inequality $$\frac{1}{2}\int_M \left(|\nabla_{g} u|^2+\beta(x)u^2\right)\d-\mu_{0}\int_M \beta(x)F(u)\d<\tau.$$
\end{theorem}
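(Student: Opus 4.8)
The plan is to decouple the system by solving the Maxwell equation in terms of $u$ and then to treat the resulting one‑variable functional by an abstract perturbation argument of Ricceri type. For fixed $u\in H^1_g(M)$, the equation $-\Delta_g\phi+\phi=qu^2$ has a unique weak solution $\phi_u\in H^1_g(M)$ by Lax--Milgram; I would first recall its standard properties — $\phi_u\ge0$, $\|\phi_u\|_{H^1_g(M)}\le C\|u\|^2_{H^1_g(M)}$, the $C^1$ and sequentially weakly continuous dependence $u\mapsto\phi_u$, and the identity which, up to a factor $4$, produces the derivative of $u\mapsto\int_M\phi_u u^2\,\d$. With $e=\lambda$, the pair $(u,\phi_u)$ is a weak solution of $(\mathcal{SM}_{\Psi(\lambda,\cdot)}^{\lambda})$ in the sense of \eqref{schrodingerweak}--\eqref{maxwellweak} if and only if $u$ is a critical point of the $C^1$ functional
\[
\mathcal{J}_\lambda(u)=\frac12\int_M\big(|\nabla_g u|^2+\beta(x)u^2\big)\,\d+\frac{\lambda}{4}\int_M\phi_u u^2\,\d-\int_M\big(\lambda\alpha(x)+\mu_0\beta(x)\big)F(u)\,\d .
\]
I would then write $\mathcal{J}_\lambda=\mathcal{E}_0+\lambda\,\mathcal{G}$, where $\mathcal{E}_0(u)=\frac12\int_M(|\nabla_g u|^2+\beta(x)u^2)\,\d-\mu_0\int_M\beta(x)F(u)\,\d$ is exactly the quantity appearing in the conclusion, and $\mathcal{G}(u)=\frac14\int_M\phi_u u^2\,\d-\int_M\alpha(x)F(u)\,\d$.

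The core of the proof is the analysis of $\mathcal{E}_0$. Since $\tfrac12 t^2-\mu_0F(t)=\Phi_{\mu_0}(t)$, one has $\mathcal{E}_0(u)=\tfrac12\int_M|\nabla_g u|^2\,\d+\int_M\beta(x)\,\Phi_{\mu_0}(u(x))\,\d$. Hypothesis \hyperref[f2-felt]{$(f_{2})$} forces $F(s)/s^2\to0$, hence $\Phi_{\mu_0}$ is coercive — so it is bounded below and its set of global minima is compact — and in turn $\mathcal{E}_0$ is coercive, bounded on bounded sets, and sequentially weakly lower semicontinuous on $H^1_g(M)$. Because $\beta>0$, integrating the pointwise bound $\Phi_{\mu_0}(u(x))\ge\inf_t\Phi_{\mu_0}(t)$ gives $\mathcal{E}_0(u)\ge\|\beta\|_{L^1(M)}\inf_t\Phi_{\mu_0}(t)$, with equality precisely when $u$ is a constant function whose value is a global minimum point of $\Phi_{\mu_0}$. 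Hence $\inf_{H^1_g(M)}\mathcal{E}_0=\|\beta\|_{L^1(M)}\inf_t\Phi_{\mu_0}(t)$ — which is $<\tau$ by the hypothesis on $\tau$ — and the set of global minimizers of $\mathcal{E}_0$ is the set of those constant functions; it is homeomorphic to the set of global minima of $\Phi_{\mu_0}$ and therefore, by \hyperref[f4-felt]{$(f_{4})$}, has at least $m$ connected components, each compact. This is the mechanism transferring the purely one‑dimensional topological information in \hyperref[f4-felt]{$(f_{4})$} to the energy functional.

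Next I would perturb. The functional $\mathcal{G}$ is $C^1$ with compact derivative, using the compact embeddings $H^1_g(M)\hookrightarrow L^p(M)$ for $p<6$ on the compact $3$‑manifold, the subquadratic growth of $F$ from \hyperref[f2-felt]{$(f_{2})$}, and the compactness of $u\mapsto\phi_u$. Applying the abstract theorem of Ricceri on local minima of a perturbed coercive functional whose set of global minima has several connected components (cf.\ \cite{Ricceri}), with $\Phi=\mathcal{E}_0$, $\Psi=\mathcal{G}$ and threshold $r=\tau$, I obtain $\lambda_\tau>0$ such that for every $\lambda\in(0,\lambda_\tau)$ the functional $\mathcal{J}_\lambda=\mathcal{E}_0+\lambda\mathcal{G}$ possesses at least $m$ local minima $u_1^\lambda,\dots,u_m^\lambda$ lying in $\mathcal{E}_0^{-1}((-\infty,\tau))$, i.e.\ with $\mathcal{E}_0(u_i^\lambda)<\tau$. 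After possibly shrinking $\lambda_\tau$, the growth estimates above together with $\phi_u\ge0$ make $\mathcal{J}_\lambda$ coercive, and the compactness of $u\mapsto f(u)$ and $u\mapsto\phi_u$ make it satisfy the Palais--Smale condition; the presence of $m\ge2$ distinct local minima then yields, by a minimax (mountain‑pass) argument, one further critical point. Altogether $\mathcal{J}_\lambda$ has at least $m+1$ critical points, and the corresponding pairs $(u_i^\lambda,\phi_{u_i^\lambda})$ provide $m+1$ weak solutions of $(\mathcal{SM}_{\Psi(\lambda,\cdot)}^{\lambda})$, $m$ of which satisfy $\tfrac12\int_M(|\nabla_g u|^2+\beta(x)u^2)\,\d-\mu_0\int_M\beta(x)F(u)\,\d<\tau$.

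The step I expect to be the main obstacle is the verification of the hypotheses of the abstract perturbation theorem for the pair $(\mathcal{E}_0,\mathcal{G})$ — sequential weak lower semicontinuity and coercivity of $\mathcal{E}_0$, compactness of $\mathcal{G}'$, and whichever regularity/invertibility condition the chosen version imposes on $u\mapsto-\Delta_g u+\beta u-\mu_0\beta f(u)$ — together with making rigorous the identification of the set of global minimizers of $\mathcal{E}_0$ with the constant functions taking values in the minimum set of $\Phi_{\mu_0}$, so that the count ``at least $m$ connected components'' is genuinely inherited. The decoupling via $\phi_u$, the coercivity of $\mathcal{J}_\lambda$ for small $\lambda$, and the Palais--Smale verification are routine by comparison.
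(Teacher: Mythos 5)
Your proposal is correct and follows essentially the same route as the paper: the same decomposition $\mathcal{J}_\lambda=\mathcal{N}+\lambda\mathcal{G}$ with $\mathcal{N}(u)=\frac12\|u\|_\beta^2-\mu_0\int_M\beta(x)F(u)\,\d$, the same identification of the global minimizers of $\mathcal{N}$ with the constant functions valued in the minimum set of $\Phi_{\mu_0}$ (hence $m$ weakly connected components), and the same abstract multiplicity theorem of Ricceri (Theorem~\ref{Ricceri-m}, from \cite{Ricceri-tobb}). The only cosmetic differences are that the paper obtains the sequential weak lower semicontinuity of $\mathcal{G}$ by proving convexity of $u\mapsto\int_M\phi_u u^2\,\d$ rather than by compactness, and that the abstract theorem already delivers all $m+1$ critical points, so your additional mountain-pass step is not needed.
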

\begin{remark}
 Taking into account the result of Cordaro \cite{Cordaro} and Anello \cite{Anello-perturbacio} one can prove the following: Consider the following system:
 $$
 \begin{cases}
 	-\Delta_{g}u+\alpha(x)u+\lambda\phi u=\alpha(x)f(u)+\lambda g(x,u), & M\\
 	-\Delta_{g}\phi+\phi=qu^{2}, & M
 \end{cases}$$ where $\alpha\in L^{\infty}(M)$ with $essinf\alpha>0$, $f:\mathbb{R}\to\mathbb{R}$ is a continuous function and $g:M\times\mathbb{R}\to\mathbb{R},$ besides being a Carathéodory function, is such that, for some $p>3(=\dim M)$, $\sup_{|s|\leq t}g(\cdot,s)\in L^{p}(M)$ and $g(\cdot,t)\in L^{\infty}(M)$ for all $t\in\mathbb{R}$. If the set $$G_{f}=\left\{ t\in\mathbb{R}:\ \frac{1}{2}t^{2}-\int_{0}^{t}f(s)\mathrm{d}s=\inf_{\xi\in\mathbb{R}}\left(\frac{1}{2}\xi^{2}-\int_{0}^{\xi}f(s)\mathrm{d}s\right)\right\} $$ has $m\geq2$ bounded connected components, then the system has at least $m+\ds \left[\frac{m}{2}\right]$ weak solutions. For the proof, one can use a truncation argument combining with the abstract critical point theory result of Anello \cite[Theorem 2.1]{Anello-abtract}. Note that, the similar truncation method which was presented in \cite{Cordaro} fails, due to the extra term $\ds \int_M \phi_u u^2$. To overcome this difficulty, one can use the same method as in \cite[Proposition 3.1 (i)\&(ii)]{FK16} \rm{(}see also \cite{Kri-JMPA}\rm{)}.
\end{remark}

Note also that, similar multiplicity results was obtained by Krist\'aly and R\v adulescu in \cite{Kristaly-Studia}, for Emden-Fowler type equations.

Our abstract tool for proving the Theorem \ref{thm:m-megoldas} is the following abstract theorem that we recall here (see \cite{Ricceri-tobb}):

\begin{customthm}{\textbf{A}}\label{Ricceri-m}Let $H$ be a separable and reflexive real Banach
	space, and let $\mathcal{N},\mathcal{G}:H\to\mathbb{R}$ be two sequentially
	weakly lower semi-continuous and continuously Gateaux differentiable
	functionals., with $\mathcal{N}$ coercive. Assume that the functional
	$\mathcal{N}+\lambda\mathcal{G}$ satisfies the Palais-Smale condition
	for every $\lambda>0$ small enough and that the set of all global
	minima of $\mathcal{N}$ has at least $m$ connected components in
	the weak topology, with $m\geq2.$ Then, for every $\eta>\inf_{H}\mathcal{N}$,
	there exists $\overline{\lambda}>0$ such that for every $\lambda\in(0,\overline{\lambda})$
	the functional $\mathcal{N}+\lambda\mathcal{G}$ has at least $m+1$
	critical points, $m$ of which are in $\mathcal{N}^{-1}((-\infty,\eta)).$ 
	
\end{customthm}

%In order to discuss a stability result under a perturbation for the $(\mathcal{SM}_{\Psi(\lambda,\cdot)}^{\lambda})$ with a nonlinearity $f$ satisfying condition (\hyperref[f4-felt]{$f_4$}), we prove the following theorem:
%\begin{theorem}\label{akarmilyenperturb}
%	
%\end{theorem}

As a counterpart of the Theorem \ref{thm:sublinear1} we consider
the case when the continuous function $f:[0,+\infty)\to\mathbb{R}$
satisfies the following assumptions:
\begin{itemize}
	\item[$(\tilde{f}_{1})$] \label{f222-felt}$|f(s)|\leq C(1+|s|^{p-1})$, for all $s\in\mathbb{R}$,
	where $p\in(2,6)$; 
	\item[$(\tilde{f}_{2})$] \label{f333-felt} there exists $\eta>4$ and $\tau_{0}>0$ such
	that 
	\[
	0<\eta F(s)\leq sf(s),\forall |s|\geq\tau_{0}.
	\]
\end{itemize}
\begin{theorem}
	\label{thm:szuperlinear}Let $(M,g)$ be a $3-$dimensional compact
	Riemannian manifold without boundary, and let $\beta \equiv 1$. Assume that $\Psi(\lambda,x)=\lambda$. Let $f:\mathbb{R}\to\mathbb{R}$ be a
	continuous function, which satisfies hypotheses \hyperref[f222-felt]{{\rm ($\tilde{f}_{1}$)}},
	\hyperref[f333-felt]{\rm ($\tilde{f}_{2}$)}. Then there exists $\lambda_{0}$ such that for
	every $0<\lambda<\lambda_{0}$ the problem $(\mathcal{SM}_{\lambda}^e)$
	has at least two weak solutions.
\end{theorem}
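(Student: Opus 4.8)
The plan is to carry out the standard ``one-variable'' reduction of the system to a single elliptic equation on $(M,g)$, and then to extract two critical points of the resulting energy functional $J_\lambda$ on $H_g^1(M)$: one by constrained minimization inside a small ball, and one by the Mountain Pass theorem, with the Ambrosetti--Rabinowitz condition $(\tilde f_2)$ ensuring the Palais--Smale condition. First I would fix $u\in H_g^1(M)$ and solve the Maxwell equation $-\Delta_g\phi+\phi=qu^{2}$: since $-\Delta_g+\mathrm{id}$ is coercive on $H_g^1(M)$, Lax--Milgram yields a unique $\phi_u\in H_g^1(M)$, which is non-negative, depends $C^1$-smoothly on $u$, and satisfies $\|\phi_u\|_{H_g^1}\le C\|u\|^2$ (all standard on compact manifolds, cf. \cite{FK16,DH,HW}). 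Substituting $\phi_u$ back and using the hypotheses $\beta\equiv1$, $\Psi(\lambda,\cdot)\equiv\lambda$ of Theorem \ref{thm:szuperlinear}, a pair $(u,\phi)\in H_g^1(M)\times H_g^1(M)$ is a weak solution of $(\mathcal{SM}_\lambda^e)$ if and only if $\phi=\phi_u$ and $u$ is a critical point of
\[
J_\lambda(u)=\frac12\int_M\bigl(|\nabla_g u|^2+u^2\bigr)\d+\frac e4\int_M\phi_u u^2\,\d-\lambda\int_M F(u)\,\d ,
\]
which is of class $C^1$ on $H_g^1(M)$ thanks to the subcritical growth $(\tilde f_1)$ (recall $2^\ast=6$ in dimension three).

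I would then record the two functional-analytic ingredients. The functional $J_\lambda$ is sequentially weakly lower semicontinuous: the quadratic part is weakly lsc, while the maps $u\mapsto\int_M\phi_u u^2\,\d$ and $u\mapsto\int_M F(u)\,\d$ are in fact sequentially weakly continuous, since $H_g^1(M)\hookrightarrow L^q(M)$ is compact for every $q<6$ and $\phi_{u_n}\to\phi_u$ strongly whenever $u_n\rightharpoonup u$. Moreover $J_\lambda$ satisfies the Palais--Smale condition for every $\lambda>0$: for a Palais--Smale sequence $(u_n)$ one obtains $J_\lambda(u_n)-\eta^{-1}\langle J_\lambda'(u_n),u_n\rangle\ge\bigl(\tfrac12-\tfrac1\eta\bigr)\|u_n\|^2-C\lambda$, where $(\tilde f_2)$ is used to absorb $\int_M\bigl(F(u_n)-\eta^{-1}f(u_n)u_n\bigr)\d$ and the hypothesis $\eta>4$ is used precisely so that the coefficient $\tfrac14-\tfrac1\eta$ in front of the non-negative quartic term $\int_M\phi_{u_n}u_n^2\,\d$ is non-negative; hence $(u_n)$ is bounded, and the compactness just mentioned upgrades the weak limit to a strong one.

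For the first critical point I would minimize $J_\lambda$ locally. From $(\tilde f_1)$ one has $|F(s)|\le C(|s|+|s|^p)$, so $J_\lambda(u)\ge\tfrac12\|u\|^2-\lambda C(\|u\|+\|u\|^p)$ on $H_g^1(M)$. Fixing $\rho_0>0$, there is $\lambda_1>0$ such that $\inf_{\|u\|=\rho_0}J_\lambda(u)\ge\alpha:=\tfrac14\rho_0^2>0$ for all $\lambda\in(0,\lambda_1)$. Since $H_g^1(M)$ is reflexive, the closed ball $\overline B_{\rho_0}$ is weakly compact, so the weakly lsc functional $J_\lambda$ attains its infimum over $\overline B_{\rho_0}$ at some $u_2$; because this infimum is $\le J_\lambda(0)=0<\alpha$, the point $u_2$ lies in the open ball and is therefore a local minimizer, hence a critical point, of $J_\lambda$, with $J_\lambda(u_2)\le0$.

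For the second critical point I would invoke the Mountain Pass theorem. The Ambrosetti--Rabinowitz condition $(\tilde f_2)$ gives $F(s)\ge c_1|s|^\eta-c_2$ with $\eta>4$, so, using $\phi_{tv}=t^2\phi_v$, one gets $J_\lambda(tv)\to-\infty$ as $t\to+\infty$ for any fixed $v\in H_g^1(M)\setminus\{0\}$; choose $t_0$ with $\|t_0v\|>\rho_0$ and $J_\lambda(t_0v)<0$. Together with $J_\lambda(0)=0$, the sphere estimate above, and the Palais--Smale condition, the Mountain Pass theorem produces a critical point $u_1$ with $J_\lambda(u_1)\ge\alpha>0$. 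Since $J_\lambda(u_1)>0\ge J_\lambda(u_2)$, the critical points $u_1\ne u_2$ are distinct, so $(u_1,\phi_{u_1})$ and $(u_2,\phi_{u_2})$ are two distinct weak solutions; one then takes $\lambda_0:=\lambda_1$. The step I expect to be the main obstacle is the Palais--Smale verification for the nonlocal functional $J_\lambda$: the quartic coupling term is exactly what forces the exponent restriction $\eta>4$ instead of the classical $\eta>2$, and the compactness of the Sobolev embedding on the compact manifold $(M,g)$ is what makes both the weak continuity of $u\mapsto\int_M\phi_u u^2\,\d$ and the strong-convergence step go through.
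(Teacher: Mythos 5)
Your proposal is correct, but it reaches the conclusion by a different route than the paper. The paper does not run the mountain-pass argument by hand: it casts $\mathcal{E}_\lambda$ as $J_\mu=\mu\mathscr{H}-\mathcal{F}$ with $\mu$ proportional to $1/\lambda$ and invokes Ricceri's alternative (Theorem \hyperref[Ricceritetel]{B}): once $\mu$ exceeds the inf--sup quantity $h(\tau)$ associated with the sublevel set $\{\mathscr{H}<\tau\}$, either $J_\mu$ has a strict global minimum in that sublevel set or it has two critical points; the paper then rules out the first horn by showing $J_\mu(tu_0)\to-\infty$ as $t\to\infty$ (using $F(ts)\geq t^{\eta}F(s)$ and $\phi_{tu}=t^2\phi_u$), and the explicit bound $h(\tau)\leq \frac{C}{2}\tau^{1/2}+\frac{C\kappa_p^p}{p}\tau^{(p-2)/2}$ produces $\lambda_0$. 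Your version implements essentially the mechanism hidden inside Theorem B directly: the smallness of $\lambda$ gives a positive infimum on the sphere $\|u\|=\rho_0$ (this plays exactly the role of $\mu>h(\tau)$), weak compactness of the ball plus sequential weak lower semicontinuity gives an interior local minimizer at nonpositive level, and unboundedness from below plus the mountain pass theorem gives a second critical point at positive level. The two ingredients that carry all the weight are identical in both treatments: the Ambrosetti--Rabinowitz condition with $\eta>4$ (not the classical $\eta>2$) so that the coefficient $\frac14-\frac1\eta$ of the nonnegative nonlocal term $\int_M\phi_u u^2$ is nonnegative in the Palais--Smale boundedness estimate, and the compact embeddings $H^1_g(M)\hookrightarrow L^q(M)$, $q<6$, for the weak continuity of $\mathcal{F}$ and of $u\mapsto\int_M\phi_u u^2$. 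What your route buys is self-containedness and an explicit separation of the two critical levels ($J_\lambda(u_1)\geq\alpha>0\geq J_\lambda(u_2)$); what the paper's route buys is brevity, since the extraction of the second critical point is delegated entirely to the abstract theorem and only coercivity, weak continuity, (PS), unboundedness from below and the estimate on $h(\tau)$ need to be checked. I see no gap in your argument; the only point worth making explicit is the standard strong-convergence step in the (PS) verification, which uses the monotonicity inequality $\int_M(u\phi_u-v\phi_v)(u-v)\,{\rm d}v_g\geq 0$ proved earlier in the paper.
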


Our abstract tool for proving the previous theorem is the following abstract theorem that we recall here (see \cite{Ricceri2}):
\begin{customthm}{\textbf{B}}
\label{Ricceritetel} Let $E$ be a reflexive real Banach space, and let $\Phi, \Psi :E \to \mathbb{R}$ be two continuously
G\^ateaux differentiable functionals such that $\Phi$ is
sequentially weakly lower semi-continuous and coercive. Further,
assume that $\Psi$ is sequentially weakly continuous. In addition,
assume that, for each $\mu > 0$, the functional $J_\mu:=\mu\Phi -
\Psi$ satisfies the classical compactness Palais-Smale condition.
Then for each $\rho >\inf_E\Phi$ and each $$\mu>\inf_{u\in
	\Phi^{-1}((-\infty,\rho))}\frac{\sup_{v \in
		\Phi^{-1}((-\infty,\rho))}\Psi(v)-\Psi(u)}{\rho-\Phi(u)},$$
the following alternative holds: either the functional $J_\mu$ has a strict
global minimum which lies in $\Phi^{-1}((-\infty,\rho))$, or $J_\mu$ has at least two critical points one of which lies in $\Phi^{-1}((-\infty,\rho))$.
\end{customthm}
%In order to discuss a stability result under a perturbation for the $(\mathcal{SM}_{\lambda}^e)$ with a superlinear nonlinearity, we prove the following theorem:
%\begin{theorem}\label{akarmilyenperturb}
%	
%\end{theorem}
\section{Proof of the main results}
Let $\beta\in C^\infty(M)$ be a positive function. For every $u\in C^\infty(M)$ let us denote by $$\|u\|_\beta^2=\int_M |\nabla_g u|^2+\beta(x)u^2\d.$$ The
Sobolev space $H^1_\beta$ is defined as the completion of $C^\infty(M)$ with respect
to the norm $\|\cdot\|_\beta$. Clearly, $H^1_\beta$ is a Hilbert space. Note that, since $\beta$ is positive, the norm $\|\cdot\|_\beta$ is equivalent to the standard norm, i.e., we have that 
\begin{equation}\label{normakhasonlitasa}\min\left\{1,\min_M \sqrt{\beta(x)}\right\}\|u\|_{H^1_g(M)}\leq \|u\|_\beta\leq \max\left\{1,\sqrt{\|\beta\|_{L^\infty(M)}}\right\}\|u\|_{H^1_g(M)}.
\end{equation}
Note that, $H^1_\beta(M)$ is compactly embedded in $L^p(M)$, $p\in [1,6)$; the Sobolev embedding constant will be denoted by $\kappa_{p}$.

We define the energy functional $\mathscr{J}_\lambda:H^1_g(M)\times
H^1_g(M)\to \mathbb{R}$ associated with system $(\mathcal{SM}_{\lambda}^e)$, namely,
$$\mathscr{J}_\lambda(u,\phi)=\frac{1}{2}\|u\|_\beta^2+\frac{e}{2}\int_M
\phi u^2 {\rm d}v_g-\frac{e}{4q}\int_M |\nabla_g \phi|^2{\rm
	d}v_g-\frac{e}{4q}\int_M\phi^2 {\rm d}v_g-\int_M
\Psi(x,\lambda)F(u) {\rm d}v_g.$$ It is easy to see that the functional $\mathscr{J}_\lambda$ is well-defined and of class $C^1$ on $H^1_g(M)\times H^1_g(M)$. Moreover, due to relations
(\ref{schrodingerweak}) and (\ref{maxwellweak}) the pair $(u,\phi)\in H^1_g(M)\times
H^1_g(M)$ is a weak solution of $(\mathcal{SM}_{\lambda}^e)$ if and only if $(u,\phi)$ is a
critical point of $\mathscr{J}_\lambda$.

Using the Lax-Milgram theorem one can see that the equation $$-\Delta_g \phi+\phi=qu^2, \ \mbox{ in }M$$ has a unique solution for any fixed $u$. By exploring an idea of Benci and Fortunato \cite{BF}, we introduce the map $\phi_u:H_g^1(M)\to H_g^1(M)$ by associating to
every $u\in H^1_g(M)$ the unique solution $\phi=\phi_u$ of the
Maxwell equation. Thus, one can define the "one-variable" energy
functional $\mathcal{E}_{\lambda}:H_g^1(M)\to \mathbb{R}$ associated
with  system $(\mathcal{SM}_\lambda^e)$:
\begin{equation}\label{one-dim-energy}
\mathcal{E}_{\lambda}(u)=\frac{1}{2}\|u\|_{\beta}^2+\frac{e}{4}\int_M
\phi_u u^2 {\rm d}v_g-\ds\mathcal{F}(u),
\end{equation}
where $\mathcal F:H^1_g(M)\to \mathbb R$ is the
functional defined by $$\ds\mathcal{F}(u)=\int_M \Psi(x,\lambda)F(u)\d.$$ 
By using standard variational arguments, one has that the pair $(u,\phi)\in H^1_g(M)\times
H^1_g(M)$ is a critical point of $\mathscr{J}_\lambda$ if and only
if $u$ is a critical point of $\mathcal{E}_{\lambda}$ and
$\phi=\phi_u$, see for instance \cite{FK16}. Moreover, we  have that
\begin{equation}
\label{derivalt} \mathcal{E}_{\lambda}'(u)(v)=\int_M(\langle\nabla_g
u,\nabla_g v\rangle+\beta(x)uv+e\phi_u uv)\d-\int_M
\Psi(x,\lambda)f(u)v\d.\end{equation}

%In the sequel we present some basic properties of the map $u\mapsto \phi_u$:
%\begin{lemma}\label{tulajdonsagok} Let $(M,g)$ be a compact Riemannian manifold without boundary. The map $u\mapsto \phi_u:H_g^1(M)\to H_g^1(M)$ has the following properties:
%	\begin{itemize}
%		\item[(a)]\label{a-tulajdonsag} $\ds\|\phi_u\|_{H^1_g(M)}^2=q\int_M \phi_u u^2 \d$, $\phi_u\geq 0$;
%		\item[(b)]\label{b-tulajdonsag} if $u_n \rightharpoonup u$ in $H^1_{g}(M)$, then  $\ds \int_M \phi_{u_n} u_n^2 \d \to \int_M \phi_{u} u^2 \d;$
%		\item[(c)]\label{c-tulajdonsag} The map $\ds u\mapsto \int_{M}\phi_u u^2\d$ is convex;
%		\item[(d)]\label{d-tulajdonsag} We have that $\ds \int_M \left(u\phi_u-v\phi_v\right)(u-v)\d\geq 0 \hbox{ for all }u,v\in H^1_g(M)$;
%		\item[(e)]\label{e-tulajdonsag} If $v(x)\leq u(x)$ a.e. $x\in M$, then $\phi_v\leq \phi_u$.
%		%\item[(f)]\label{f-tulajdonsag} We have that $\phi_{tu}=t^2 \phi_u$.
%	\end{itemize}
%	
%\end{lemma}
%For the proof of the previous lemma, one can consult the following references Ambrosetti and Ruiz \cite{Ambrosetti-Ruiz}, d'Avenia \cite{davenia}, D'Aprile and Mugnai  \cite{mugnai} and Krist\'aly and Repovs \cite{kristalyrepovs}.

\subsection{Schr\"odinger-Maxwell systems involving sublinear nonlinearity}
In this section we set $\Psi(x,\lambda)=\lambda\alpha(x)+\mu_0\beta(x)$.
Recall that  

$$
\mathcal{E}_{\lambda}(u)=\frac{1}{2}\|u\|_{\beta}^2+\frac{e}{4}\int_M
\phi_u u^2 {\rm d}v_g-\int_M \Psi(x,\lambda)F(u)\d.
$$
In order to apply variational methods, we prove some elementary properties of the functional $\mathcal{E}_\lambda$:\newpage
\begin{lemma}\label{coerciv}
	The energy functional $\mathcal{E}_{\lambda}$ is coercive, for every
	$\lambda\geq0$. 
\end{lemma}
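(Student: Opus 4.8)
\textbf{Proof plan for Lemma \ref{coerciv}.}

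The plan is to show that $\mathcal{E}_\lambda(u)\to+\infty$ as $\|u\|_\beta\to\infty$ by controlling each of the three terms in the definition of $\mathcal{E}_\lambda$ separately. The first term $\frac12\|u\|_\beta^2$ is the driving quadratic term. The second term $\frac{e}{4}\int_M\phi_u u^2\,{\rm d}v_g$ is in fact \emph{nonnegative}: testing the Maxwell equation for $\phi_u$ against $\phi_u$ itself gives $\int_M(|\nabla_g\phi_u|^2+\phi_u^2)\,{\rm d}v_g=q\int_M\phi_u u^2\,{\rm d}v_g$, so $\int_M\phi_u u^2\,{\rm d}v_g=\frac1q\|\phi_u\|_{H^1_g}^2\ge 0$. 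Hence this term only helps coercivity and may simply be discarded from below. So the only real work is to bound the nonlinear term $\int_M\Psi(x,\lambda)F(u)\,{\rm d}v_g$ from above by something growing slower than $\|u\|_\beta^2$.

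The key step is to exploit the sublinearity of $f$ at infinity. From $(f_1)$ and $(f_2)$ (equivalently, from the finiteness of $c_f=\max_{s>0}f(s)/s$), one gets for every $\varepsilon>0$ a constant $C_\varepsilon>0$ with $f(s)\le \varepsilon s + C_\varepsilon$ for all $s\ge 0$ (and $f(s)=0$ for $s\le 0$ by the convention in the Remark), hence $F(s)\le \frac{\varepsilon}{2}s^2 + C_\varepsilon|s|$ for all $s\in\mathbb{R}$. Since $\Psi(x,\lambda)=\lambda\alpha(x)+\mu_0\beta(x)$ is bounded on the compact manifold $M$, say by $\|\Psi(\cdot,\lambda)\|_{L^\infty}$, integrating gives
\[
\int_M \Psi(x,\lambda)F(u)\,{\rm d}v_g \le \|\Psi(\cdot,\lambda)\|_{L^\infty}\left(\frac{\varepsilon}{2}\|u\|_{L^2}^2 + C_\varepsilon\|u\|_{L^1}\right).
\]
Now use the compact (hence continuous) Sobolev embeddings $H^1_\beta(M)\hookrightarrow L^2(M)$ and $H^1_\beta(M)\hookrightarrow L^1(M)$ with constants $\kappa_2,\kappa_1$, and the norm equivalence \eqref{normakhasonlitasa}, to bound $\|u\|_{L^2}^2\le \kappa_2^2\|u\|_\beta^2$ and $\|u\|_{L^1}\le \kappa_1\|u\|_\beta$. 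Collecting terms,
\[
\mathcal{E}_\lambda(u)\ge \left(\frac12 - \frac{\varepsilon}{2}\|\Psi(\cdot,\lambda)\|_{L^\infty}\kappa_2^2\right)\|u\|_\beta^2 - C_\varepsilon\|\Psi(\cdot,\lambda)\|_{L^\infty}\kappa_1\|u\|_\beta.
\]
Choosing $\varepsilon$ small enough (depending on $\lambda$, $\mu_0$, and the embedding constants) that the coefficient of $\|u\|_\beta^2$ is positive, the right-hand side is a positive quadratic in $\|u\|_\beta$ minus a linear term, which tends to $+\infty$ as $\|u\|_\beta\to\infty$. This proves coercivity.

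I do not expect a genuine obstacle here; the lemma is routine once the sign of the coupling term is understood. The only point requiring a little care is the observation that the Maxwell coupling term $\frac{e}{4}\int_M\phi_u u^2\,{\rm d}v_g$ is nonnegative — without this one might worry it could be large and negative — but as noted it equals $\frac{e}{4q}\|\phi_u\|_{H^1_g}^2\ge 0$, so it can be dropped. The rest is the standard "sublinear growth beats the quadratic form only for $\varepsilon$ small" argument, with all constants harmless because $M$ is compact and the embeddings $H^1_\beta\hookrightarrow L^p$, $p\in[1,6)$, are available.
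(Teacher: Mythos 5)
Your proof is correct and follows essentially the same route as the paper's: drop the nonnegative Maxwell coupling term (since $\int_M\phi_u u^2\,{\rm d}v_g=\frac{1}{q}\|\phi_u\|_{H^1_g}^2\ge 0$), use the sublinearity of $F$ at infinity to absorb $\mathcal{F}(u)$ into an $\varepsilon$-multiple of $\|u\|_\beta^2$ plus lower-order terms, and choose $\varepsilon$ small. The only cosmetic difference is that the paper splits the integral over $\{|u|>\delta\}$ and $\{|u|\le\delta\}$ to get a constant remainder $\mathrm{Vol}_g(M)\max_{|s|\le\delta}|F(s)|$, whereas you use the global bound $F(s)\le\frac{\varepsilon}{2}s^2+C_\varepsilon|s|$ and get a harmless linear-in-$\|u\|_\beta$ remainder; both settle coercivity identically.
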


\begin{proof}
	Indeed, due to (\hyperref[f2-felt]{$f_{2}$}), we have that for every $\varepsilon>0$
	there exists $\delta>0$ such that $|F(s)|\leq\varepsilon|s|^{2}$,
	for every $|s|>\delta.$ Thus, since $\Psi(x,\lambda)\in L^\infty(M)$ we have that

	\begin{align*}
	\mathcal{F}(u)  &=\int_{\{u>\delta\}}\Psi(x,\lambda)F(u)\d+\int_{\{u\leq\delta\}}\Psi(x,\lambda)F(u)\d\\&\leq\varepsilon\left\Vert {\Psi(\cdot,\lambda)}\right\Vert _{L^{\infty}(M)}\kappa_2^2\|u\|_{\beta}^{2}+\|\Psi(\cdot,\lambda)\|_{L^{\infty}(M)}\mathrm{Vol}_g M\max_{|s|\leq\delta}|F(s)|.
	\end{align*}
	Therefore, 
	\[
	\mathcal{E}_{\lambda}(u)\geq\left(\frac{1}{2}-\varepsilon\kappa_2^2\left\Vert {\Psi(\cdot,\lambda)}\right\Vert _{L^{\infty}(M)}\right)\|u\|_{\beta}^{2}-\mathrm{Vol_{g}}M\cdot \|\Psi(\cdot,\lambda)\|_{L^{\infty}(M)}\max_{|s|\leq\delta}|F(s)|.
	\]
	In particular, if $0<\varepsilon<(2\kappa_2^2\|\Psi(\cdot,\lambda)\|_{L^{\infty}(M)})^{-1}$,
	then $\mathcal{E}_{\lambda}(u)\to\infty$ as $\|u\|_{\beta}\to\infty$.
\end{proof}

\begin{lemma}\label{PS}
	The energy functional $\mathcal{E}_{\lambda}$ satisfies the Palais-Smale
	condition for every $\lambda\geq0$. 
\end{lemma}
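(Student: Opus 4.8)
The plan is to verify the Palais--Smale condition for $\mathcal{E}_\lambda$ directly, exploiting the coercivity already established in Lemma \ref{coerciv} together with the compactness of the Sobolev embedding $H^1_\beta(M)\hookrightarrow L^p(M)$ for $p\in[1,6)$. So suppose $\{u_n\}\subset H^1_g(M)$ is a Palais--Smale sequence: $\{\mathcal{E}_\lambda(u_n)\}$ is bounded and $\mathcal{E}_\lambda'(u_n)\to 0$ in the dual space. First I would use coercivity to conclude that $\{u_n\}$ is bounded in $H^1_\beta(M)=H^1_g(M)$ (the norms being equivalent by \eqref{normakhasonlitasa}). Since this is a Hilbert space, up to a subsequence $u_n\rightharpoonup u$ weakly in $H^1_g(M)$, and by the compact embedding $u_n\to u$ strongly in $L^p(M)$ for every $p\in[1,6)$, and pointwise a.e. after a further subsequence.

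Next I would show $u_n\to u$ strongly in $H^1_g(M)$. The standard device is to test the derivative: evaluate $(\mathcal{E}_\lambda'(u_n)-\mathcal{E}_\lambda'(u))(u_n-u)$, which tends to $0$ because $\mathcal{E}_\lambda'(u_n)\to 0$ and $u_n-u$ is bounded, while $\mathcal{E}_\lambda'(u)(u_n-u)\to 0$ by weak convergence. Writing out \eqref{derivalt}, this quantity equals
\[
\|u_n-u\|_\beta^2 + e\int_M(\phi_{u_n}u_n-\phi_u u)(u_n-u)\d - \int_M \Psi(x,\lambda)\bigl(f(u_n)-f(u)\bigr)(u_n-u)\d.
\]
The term involving $\Psi$ and $f$ goes to zero: by $(f_2)$ (and $f(0)=0$, so $f$ has at most linear growth, indeed $|f(s)|\le c_f|s|$), $f(u_n)$ is bounded in $L^2(M)$ and, using the pointwise convergence together with a dominated-convergence / Vitali argument, $f(u_n)\to f(u)$ in $L^2(M)$; since $u_n-u\to 0$ in $L^2(M)$ and $\Psi(\cdot,\lambda)\in L^\infty(M)$, the integral vanishes in the limit. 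For the coupling term one uses the regularity properties of the map $u\mapsto \phi_u$: it is bounded on bounded sets and, along sequences converging strongly in $L^{12/5}(M)$ (which holds here since $12/5<6$), $\phi_{u_n}\to \phi_u$ strongly in $H^1_g(M)$, hence in $L^6(M)$; combining with $u_n$ bounded in $L^6(M)$ and $u_n-u\to 0$ in an appropriate Lebesgue space via Hölder, this term also tends to $0$. Consequently $\|u_n-u\|_\beta^2\to 0$, which gives strong convergence and proves the Palais--Smale condition.

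The main obstacle is the careful handling of the nonlocal term $\int_M \phi_{u_n}u_n(u_n-u)\d$: one must establish the continuity of $u\mapsto\phi_u$ in the right topology and track the Hölder exponents so that the product of three factors (two in $L^6$ via Sobolev, the difference going to $0$ in $L^{3/2}$ or $L^2$) is controlled. This is exactly the kind of estimate handled in \cite[Proposition 3.1]{FK16}, so I would either cite the relevant continuity property of $\phi_u$ from there or reproduce the short argument: testing the Maxwell equation for $\phi_{u_n}-\phi_u$ against itself yields $\|\phi_{u_n}-\phi_u\|_\beta^2 = q\int_M(u_n^2-u^2)(\phi_{u_n}-\phi_u)\d$, and the right side is bounded by $q\|u_n^2-u^2\|_{L^{6/5}}\|\phi_{u_n}-\phi_u\|_{L^6}\le C\|u_n-u\|_{L^{12/5}}\,(\|u_n\|_{L^{12/5}}+\|u\|_{L^{12/5}})\,\|\phi_{u_n}-\phi_u\|_\beta$, whence $\|\phi_{u_n}-\phi_u\|_\beta\to 0$. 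Everything else is routine once boundedness and these compactness inputs are in place.
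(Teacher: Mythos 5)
Your argument is correct, and it follows the paper's skeleton (coercivity $\Rightarrow$ boundedness, weak convergence plus compact embedding, then testing the derivative against $u_n-u$), but it diverges at the one genuinely delicate point, namely the nonlocal coupling term. The paper first proves the monotonicity inequality
\[
\int_M \left(u\phi_u-v\phi_v\right)(u-v)\,{\rm d}v_g\geq 0 \qquad \text{for all } u,v\in H^1_g(M),
\]
via Cauchy--Schwarz and the elementary inequality $(xy)^{1/2}(x+y)\leq x^2+y^2$; this makes the coupling contribution to $(\mathcal{E}_\lambda'(u_j)-\mathcal{E}_\lambda'(u))(u_j-u)$ nonnegative, so it can simply be dropped from the lower bound on $\|u_j-u\|_\beta^2$ and never needs to be shown to vanish. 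You instead establish the sequential continuity of $u\mapsto\phi_u$ (strong $L^{12/5}$ convergence of $u_n$ forces $\phi_{u_n}\to\phi_u$ in $H^1_g$, via testing the difference of the Maxwell equations against itself and H\"older with exponents $6/5$, $6$ and $12/5$, $12/5$), and then estimate the coupling term directly to zero. Both routes are sound; yours is the more standard one and yields the extra information that the potentials converge strongly, while the paper's monotonicity trick is shorter once the inequality is in hand and sidesteps any continuity analysis of $\phi_u$. Two cosmetic points: in your Maxwell-equation estimate the natural norm is the plain $H^1_g$ norm (the coefficient in the Maxwell equation is $1$, not $\beta$), and the bound $|f(s)|\leq c_f|s|$ should be replaced by $\sup_{s>0}|f(s)|/s<\infty$ (or by the splitting $|f(s)|\leq\varepsilon|s|+C_\varepsilon|s|^{p-1}$ used in the paper), since $c_f$ only majorizes $f(s)/s$, not its absolute value; neither affects the validity of the argument.
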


\begin{proof}
	Let $\{u_{j}\}_{j}\subset H_{g}^{1}(M)$ be a Palais-Smale sequence,
	i.e., $\{\mathcal{E}_{\lambda}(u_{j})\}_{j}$ is bounded and
	$$\Vert(\mathcal{E}_{\lambda})^{\prime}(u_{j})\Vert_{H_{g}^{1}(M)^{\ast}}\rightarrow0$$
	as $j\rightarrow\infty.$ Since $\mathcal{E}_{\lambda}$ is coercive (see Lemma \ref{coerciv}),
	the sequence $\{u_{j}\}_{j}$ is bounded in $H_{g}^{1}(M)$. Therefore,
	up to a subsequence, then $\{u_{j}\}_{j}$ converges weakly in $H_{g}^{1}(M)$
	and strongly in $L^{p}(M)$, $p\in(2,2^{*}),$ to an element $u\in H_{g}^{1}(M)$.

	First we claim that, $\hbox{for all }u,v\in H^1_g(M)$ we have that \begin{equation}\label{fontos11}\ds \int\limits_M \left(u\phi_u-v\phi_v\right)(u-v)\d\geq 0.\end{equation} This inequality is equivalent with the following one: $$\int\limits_M \phi_u u^2 \d+\int\limits_M \phi_v v^2 \d\geq \int\limits_M (\phi_u uv+\phi_vuv)\d.$$ On the other hand using the Cauchy-Schwarz inequality, we have, that \begin{align*}
	\int\limits_M (\phi_u uv+\phi_vuv)\d&\leq \left(\int\limits_M\phi_u u^2 \d\right)^{1/2} \left(\int\limits_M\phi_u v^2 \d\right)^{1/2}+\left(\int\limits_M\phi_v u^2 \d\right)^{1/2}\left(\int\limits_M\phi_v v^2 \d\right)^{1/2}\\ &=\frac{1}{q}\left(\int\limits_M (\nabla_g\phi_u\nabla_g \phi_v+\phi_u\phi_v)\d\right)^{1/2}(\|\phi_u\|_{H^1_g(M)}+\|\phi_v\|_{H^1_g(M)}) \\ & \leq \frac{1}{q}\|\phi_u\|^{1/2}_{H^1_g(M)}\|\phi_v\|^{1/2}_{H^1_g(M)}\left(\|\phi_u\|_{H^1_g(M)}+\|\phi_v\|_{H^1_g(M)}\right).
	\end{align*} 
	Taking into account the following algebraic inequality $ (xy)^{1/2}(x+y)\leq (x^2+y^2),(\forall)x,y\geq 0$, we have that $$\|\phi_u\|^{1/2}_{H^1_g(M)}\|\phi_v\|^{1/2}_{H^1_g(M)}\left(\|\phi_u\|_{H^1_g(M)}+\|\phi_v\|_{H^1_g(M)}\right)\leq \|\phi_u\|^2_{H^1_g(M)}+\|\phi_v\|^2_{H^1_g(M)}.$$ Therefore, $$\int\limits_M (\phi_u uv+\phi_vuv)\d\leq \frac{1}{q} \left(\|\phi_u\|^2_{H^1_g(M)}+\|\phi_v\|^2_{H^1_g(M)}\right)= \int\limits_M \phi_u u^2 \d+\int\limits_M \phi_v v^2 \d,$$
	which proves the claim.

	Now, using inequality \eqref{fontos11} one has 
	\[
	\int_{M}|\nabla_{g}u_{j}-\nabla_{g}u|^{2}\d+\int_{M}\beta(x)\left(u_{j}-u\right)^{2}\d\leq
	\]
	\[
	(\mathcal{E}_{\lambda})^{\prime}(u_{j})(u_{j}-u)+(\mathcal{E}_{\lambda})^{\prime}(u)(u-u_{j})+\int_{M}\Psi(x,\lambda)[f(u_{j}(x))-f(u(x))](u_{j}-u)\d.
	\]
	Since $\Vert(\mathcal{E}_{\lambda})^{\prime}(u_{j})\Vert_{H_{g}^{1}(M)^{\ast}}\rightarrow0$,
	and $u_{j}\rightharpoonup u$ in $H_{g}^{1}(M)$, the first two terms
	at the right hand side tend to $0$. Let $p\in(2,2^{*}).$ 
	
	By the
	assumptions on $f$, for every $\varepsilon>0$ there exists a constant $C_{\varepsilon}>0$
	such that $$|f(s)|\leq\varepsilon|s|+C_{\varepsilon}|s|^{p-1},$$ for
	every $s\in\mathbb{R}$. The latter relation, H\"older inequality and
	the fact that $u_{j}\rightarrow u$ in $L^{p}(M)$ imply that 
	\[
	\left\vert \int_{M}\Psi(x,\lambda)[f(u_{j})-f(u)](u_{j}-u)\d\right\vert \rightarrow0,
	\]
	as $j\to\infty.$ Therefore, $\Vert u_{j}-u\Vert_{H_{g}^{1}(M)}^{2}\to0$
	as $j\to\infty$, which proves our claim.
\end{proof}
Before we prove Theorem \ref{thm:sublinear1} we prove the following lemma:
\begin{lemma} Let $f:[0,+\infty)\to \mathbb{R}$ be a continuous function satisfying the assumptions (\hyperref[f1-felt]{$f_1$})-(\hyperref[f3-felt]{$f_3$}).
Then $$\ds c_f:=\max_{s>0}\frac{f(s)}{s}>c_F:=\max_{s>0}\frac{4F(s)}{2s^2+eqs^4}.$$
\end{lemma}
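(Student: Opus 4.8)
The plan is to show the strict inequality by locating a single point $s>0$ at which $\frac{f(s)}{s}$ already dominates $c_F$, and then arguing that this point can always be taken strictly interior to where the maximum defining $c_F$ is attained. First I would record the elementary pointwise comparison: for every $s>0$ one has
\[
\frac{4F(s)}{2s^2+eqs^4}=\frac{4\int_0^s f(t)\,{\rm d}t}{2s^2+eqs^4}\leq \frac{4\int_0^s \frac{f(t)}{t}\,t\,{\rm d}t}{2s^2+eqs^4}\leq c_f\cdot\frac{4\int_0^s t\,{\rm d}t}{2s^2+eqs^4}=c_f\cdot\frac{2s^2}{2s^2+eqs^4}<c_f,
\]
where the last step uses $eqs^4>0$. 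Taking the supremum over $s>0$ on the left gives $c_F\leq c_f$ immediately; the point is to upgrade $\leq$ to $<$.

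For the strictness I would argue as follows. Let $s_F>0$ be a point where the maximum defining $c_F$ is attained (it exists and is positive by the hypotheses $(f_1)$–$(f_3)$, exactly as the paper notes $c_F$ is well-defined and positive; positivity of $c_F$ forces $s_F>0$ and $F(s_F)>0$). Then from the chain of inequalities above evaluated at $s_F$,
\[
c_F=\frac{4F(s_F)}{2s_F^2+eqs_F^4}\leq c_f\cdot\frac{2s_F^2}{2s_F^2+eqs_F^4}<c_f,
\]
since $eqs_F^4>0$ strictly. This already closes the argument without needing any further case analysis: the factor $\frac{2s^2}{2s^2+eqs^4}$ is strictly less than $1$ at every positive $s$, and in particular at the maximizer $s_F$, so the loss is genuine.

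The one point that needs a word of care — and which I expect to be the only real obstacle — is justifying that the maximum in the definition of $c_F$ is actually attained at some finite $s_F>0$, rather than merely being a supremum approached as $s\to 0^+$ or $s\to\infty$. At the origin, $(f_1)$ gives $F(s)=o(s^2)$, so $\frac{4F(s)}{2s^2+eqs^4}\to 0$; at infinity, $(f_2)$ gives $F(s)=o(s^2)$ as well, so the ratio again tends to $0$ (the denominator grows at least like $2s^2$). Since $(f_3)$ makes the ratio strictly positive somewhere, continuity on $(0,\infty)$ together with these two vanishing limits yields an interior maximizer $s_F\in(0,\infty)$. With $s_F$ in hand the displayed strict inequality finishes the proof; alternatively, if one prefers to avoid invoking the maximizer, one can note that $c_f=\frac{f(s_f)}{s_f}$ for some $s_f>0$ (by $(f_1)$, $(f_2)$ and positivity, the max is attained), and then $c_F\leq c_f\cdot\sup_{s>0}\frac{2s^2}{2s^2+eqs^4}=c_f$ with the supremum on the right not attained, but this route is slightly less direct, so I would present the maximizer argument.
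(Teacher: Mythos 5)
Your proof is correct, and it takes a genuinely different route from the paper's. The paper evaluates the first-order condition at a maximizer $s_0$ of $s\mapsto \frac{4F(s)}{2s^2+eqs^4}$ to obtain $c_F=\frac{f(s_0)}{s_0+eqs_0^3}\le\frac{f(s_0)}{s_0}\le c_f$, and then excludes the equality $c_f=c_F$ by a separate contradiction argument: it writes $4F(\widetilde s_0)-\theta\bigl(2\widetilde s_0^2+eq\widetilde s_0^4\bigr)$ as its value at a smaller point $t_0$ plus $4\int_{t_0}^{\widetilde s_0}\bigl(f(t)-\theta(t+eqt^3)\bigr)\,dt$ and shows this sum is strictly negative. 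You replace both steps by the single pointwise bound $F(s)\le \frac{c_f}{2}s^2$, which yields $\frac{4F(s)}{2s^2+eqs^4}\le c_f\,\frac{2s^2}{2s^2+eqs^4}<c_f$ for every $s>0$, and strictness then falls out by evaluating at a maximizer $s_F$ of the ratio defining $c_F$; you correctly identify attainment of that maximum as the only point needing justification and supply it from $(f_1)$--$(f_3)$ (the ratio is continuous, vanishes at $0^+$ and at $\infty$ because $F(s)=o(s^2)$ at both ends, and is positive somewhere by $(f_3)$). Your version is cleaner: it avoids the differentiation and critical-point identity entirely (the paper's proof implicitly also needs the maximizer to exist and to be an interior critical point), and it makes transparent that the strictness comes precisely from the extra $eqs^4$ in the denominator. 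Both arguments ultimately exploit that same structural fact; yours extracts it more directly and in one stroke.
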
 
\begin{proof}
	Let $s_0>0$ be a maximum point for the function $\ds s\mapsto \frac{4F(s)}{2s^2+eqs^4}$, therefore $$c_F=\frac{4F(s_0)}{2s_0^2+eqs_0^4}=\frac{f(s_0)}{s_0+eqs_0^3}\leq \frac{f(s_0)}{s_0}\leq c_f.$$ Now we assume that $c_f=c_F:=\theta$. Let $$\widetilde{s}_0:=\inf\left\{s>0:\theta=\frac{4F(s)}{2s^2+eqs^4}\right\}.$$ Note that $\widetilde{s}_0>0$. Fix $t_0\in (0,\widetilde{s}_0)$, in particular $4F(t_0)<\theta(2t_0^3+eqt_0^4)$. On the other hand, from the definition of $c_f$, one has $f(t)\leq \theta(s+eqs^3)$. Therefore $$0=4F(\widetilde{s}_0)-\theta(2\widetilde{s}_0+eq\widetilde{s}_0^4)=\left(4F(t_0)-\theta(2t_0^2+eqt_0^4)\right)+4\int\limits_{t_0}^{\widetilde{s}_0} \left(f(t)-\theta(s+eqs^3)\right)ds<0,$$ which is a contradiction, thus $c_f>c_F$. 
\end{proof}

Now we are in the position to prove Theorem \ref{thm:sublinear1}.
\begin{proof}[Proof of Theorem \ref{thm:sublinear1}] First recall that, in this case, $\beta(x)\equiv 1$ and $\Psi(\lambda,x)=\lambda\alpha(x)$, and $\alpha\in C^{\infty}(M)$ is a positive function.

	\noindent (a) Let $\lambda\geq0.$ If we choose $v=u$ in (\ref{schrodingerweak})
	we obtain that 
	\[
	\int_{M}\left(|\nabla_{g}u|^{2}+u^{2}+e\phi_{u}u^{2}\right)\d=\lambda\int_{M}\alpha(x)f(u)u\d.
	\]
	As we already mentioned, due to the assumptions (\hyperref[f1-felt]{$f_1$})-(\hyperref[f3-felt]{$f_3$}), the number $\ds c_{f}=\max_{s>{0}}\frac{f(s)}{s}$
	is well-defined and positive. Thus, since $\ds \|\phi_u\|_{H^1_g(M)}^2=q\int_M \phi_{u} u^2\d\geq 0$,  we have
	that 
	
	$$\ds \|u\|_{H^1_g(M)}^2\leq \|u\|_{H^1_g(M)}^2+e\int_M \phi_{u} u^2\d\leq \lambda c_f \|\alpha\|_{L^\infty(M)}\int_M u^2 \d\leq \lambda c_f \|\alpha\|_{L^\infty(M)}\|u\|_{H^1_g(M)}^2.$$
	
	Therefore, if $\lambda<c_{f}^{-1}\left\Vert \alpha\right\Vert _{L^{\infty}(M)}^{-1}$,
	then the last inequality gives $u=0$. By the Maxwell's equation we
	also have that $\phi=0$, which concludes the proof of (a).
	
	\noindent (b) By using assumptions $(f_{1})$ and $(f_{2})$, one
	has 
	\[
	\lim_{\mathscr{H}(u)\to0}\frac{\mathcal{F}(u)}{\mathscr{H}(u)}=\lim_{\mathscr{H}(u)\to\infty}\frac{\mathcal{F}(u)}{\mathscr{H}(u)}=0,
	\]
	where ${\displaystyle \mathscr{H}(u)=\frac{1}{2}\|u\|_{\beta}^{2}+\frac{e}{4}\int_{M}\phi_{u}u^{2}\d}$.
	Since $\alpha\in C^{\infty}(M)_{+}\setminus\{0\},$ on
	account of ($f_{3}$), one can guarantee the existence of a suitable
	truncation function $u_{T}\in H_{g}^{1}(M)\setminus\{0\}$ such
	that $\mathcal{F}(u_{T})>0.$ Therefore, we may define 
	\[
	{\displaystyle \lambda_{0}=\inf_{\ds\substack{u\in H_{g}^{1}(M)\setminus\{0\}\\
				\mathcal{F}(u)>0
			}
		}\frac{\mathscr{H}(u)}{\mathcal{F}(u)}.}
	\]
	The above limits imply that $0<\lambda_{0}<\infty.$
	Since $H^1_g(M)$ contains the positive constant functions on $M$, we have $$\lambda_0=\inf_{\ds\substack{u\in H_{g}^{1}(M)\setminus\{0\}\\
			\mathcal{F}(u)>0
		}
	}\frac{\mathscr{H}(u)}{\mathcal{F}(u)}\leq  \max_{s>0}\frac{2s^2+eqs^4}{4F(s)\|\alpha\|_{L^1(M)}}= c_F^{-1}\|\alpha\|_{L^1(M)}^{-1}.$$
	
	For every $\lambda>\lambda_{0}$,
	the functional $\mathcal{E}_{\lambda}$ is bounded from below, coercive
	and satisfies the Palais-Smale condition (see Lemma \ref{coerciv}, Lemma \ref{PS}). If we fix $\lambda>\lambda_{0}$
	one can choose a function $w\in H_{g}^{1}(M)$ such that $\mathcal{F}(w)>0$
	and $$\lambda>\frac{\mathscr{H}(w)}{\mathcal{F}(w)}\geq\lambda_{0}.$$

	In particular, $$\ds c_{1}:=\inf_{H_{g}^{1}(M)}\mathcal{E}_{\lambda}\leq\mathcal{E}_{\lambda}(w)=\mathscr{H}(w)-\lambda\mathcal{F}(w)<0.$$
	The latter inequality proves that the global minimum $u_{\lambda}^{1}\in H_{g}^{1}(M)$
	of $\mathcal{E}_{\lambda}$ on $H_{g}^{1}(M)$ has negative energy
	level. 
	
	In particular, $(u_{\lambda}^{1},\phi_{u_{\lambda}^{1}})\in H_{g}^{1}(M)\times H_{g}^{1}(M)$
	is a nontrivial weak solution to $(\mathcal{SM}^e_{\lambda})$. 
	
	\noindent Let $\nu\in(2,6)$ be fixed. By assumptions, for any $\eps>0$
	there exists a constant $C_{\eps}>0$ such that 
	\[
	0\leq|f(s)|\leq\frac{\eps}{\|\alpha\|_{L^{\infty}(M)}}|s|+C_{\eps}|s|^{\nu-1}\hbox{ for all}\ s\in\mathbb{R}.
	\]
	Thus
	
	\begin{align*}0\leq |\mathcal{F}(u)| &\leq \int_M
	\alpha(x)|F(u(x))|\d \\ &\leq \int_M
	\alpha(x)\left(\frac{\eps}{2\|\alpha\|_{L^\infty(M)}}u^2(x)+\frac{C_\eps}{\nu}|u(x)|^\nu\right)\d
	\\ &\leq
	\frac{\eps}{2}\|u\|^2_{H^1_g(M)}+\frac{C_\eps}{\nu}\|\alpha\|_{L^\infty(M)}
	\widetilde{\kappa}_\nu^\nu \|u\|^\nu_{H^1_g(M)},\end{align*}
	where $\widetilde{\kappa}_\nu$ is the embedding constant in the compact embedding $H^1_g(M)\hookrightarrow L^\nu(M),\ \nu\in [1,6)$.
	
	Therefore, 
	\[
	\mathcal{E}_{\lambda}(u)\geq\frac{1}{2}(1-\lambda\eps)\|u\|_{H^1_g(M)}^{2}-\frac{\lambda C_{\eps}}{\nu}\|\alpha\|_{L^{\infty}(M)}\widetilde{\kappa}_{\nu}^{\nu}\|u\|_{H^1_g(M)}^{\nu}.
	\]
	Bearing in mind that $\nu>2$, for enough small $\rho>0$ and $\eps<\lambda^{-1}$
	we have that 
	\[
	\ds\inf_{\|u\|_{H^1_g(M)}=\rho}\mathcal{E}_{\lambda}(u)\geq\frac{1}{2}\left(1-\eps\lambda\right)\rho-\frac{\lambda C_{\eps}}{\nu}\|\alpha\|_{L^{\infty}(M)}\widetilde{\kappa}_{\nu}^{\nu}\rho^{\frac{\nu}{2}}>0.
	\]
	A standard mountain pass argument (see for instance, Willem \cite{Willem}) implies
	the existence of a critical point $u_{\lambda}^{2}\in H_{g}^{1}(M)$
	for $\mathcal{E}_{\lambda}$ with positive energy level. Thus $(u_{\lambda}^{2},\phi_{u_{\lambda}^{2}})\in H_{g}^{1}(M)\times H_{g}^{1}(M)$
	is also a nontrivial weak solution to $(\mathcal{SM}^e_{\lambda})$.
	Clearly, $u_{\lambda}^{1}\neq u_{\lambda}^{2}$. 
\end{proof}

It is also clear that the function $\ds q\mapsto \max_{s>0}\frac{4F(s)}{2s^2+eqs^4}$ is non-increasing. Let $a>1$ be a real number. Now, consider the following function

\[
f(s)=\begin{cases}
0, & 0\leq s<1,\\
s+g(s), & 1\leq s<a,\\
a+g(a), & s\geq a,
\end{cases}
\]
where $g:[1,+\infty)\to \mathbb{R}$ is a continuous function with the following properties
\begin{itemize}
	\item[$(g_1)$] $g(1)=-1$;
	\item[$(g_2)$] the function  $\ds s\mapsto\frac{g(s)}{s}$ is non-decreasing on $[1,+\infty)$;
	\item[$(g_3)$] $\ds\lim_{s\to \infty}g(s)<\infty$.
\end{itemize}
In this case the \[
F(s)=\begin{cases}
0, & 0\leq s<1,\\
\ds \frac{s^{2}}{2}+G(s)-\frac{1}{2}, & 1\leq s<a,\\
\ds (a+g(a))s-\frac{a^{2}}{2}+G(a)-ag(a)-\frac{1}{2}, & s\geq a,
\end{cases}
\]
where $\ds G(s)=\int_{1}^{s}g(t)dt$. It is also clear that $f$ satisfies the assumptions $(\hyperref[f1-felt]{f_1})-(\hyperref[f3-felt]{f_3})$.

Thus, a simple calculation shows that $$c_f=\frac{a+g(a)}{a}.$$ We also claim that $$\widehat{c}_F=\lim_{q\to 0}c_F=\frac{(a+g(a))^2}{a^2+2ag(a)-2G(a)+1}.$$
Indeed, $$\widehat{c}_F=\max_{s>0}\frac{2F(s)}{s^2}.$$ It is clear that, it is enough to show that the maximum of the function $\ds \frac{2F(s)}{s^2}$ is achieved on the interval $s\geq a$, i.e., $$sg(s)-2G(s)> -1,\ s>1.$$ Now, using a result of \cite[page 42, equation (4.3)]{Gromov} (see also \cite[Theorem 1.3]{Gromov-eredmeny}), we have that the function $\ds \frac{G(s)}{\frac{s^2-1}{2}}$ is increasing, thus $$sg(s)-2G(s)\geq \frac{g(s)}{s}\geq -1, \ s\geq -1,$$ which proves our claim.

One can see that, from the assumptions on $g$, that the values $c_f$ and $\widehat{c}_F$ may be arbitrary close to each other. Indeed, when $$\ds \lim_{a\to \infty} c_f=\lim_{a\to \infty}\widehat{c}_F=1.$$ Therefore, if $\alpha\equiv 1$ then the threshold values are $c_f^{-1}$ and $c_F^{-1}$ (which are constructed independently), i.e. if $\lambda \in (0,c_f^{-1})$ we have just the trivial solution, while if $\lambda \in (c_F^{-1},+\infty)$ we have at least two solutions. $\lambda$ lying in the
gap-interval $[c_f^{-1},c_F^{-1}]$ we have no information on the number of solutions for $(\mathcal{SM}^e_{\lambda})$.

Taking into account the above example we see that if the "impact" of the Maxwell equation is small ($q\to 0$), then the values $c_f$ and $c_F$ may be arbitrary close to each other. 
%same example as in Krist\'aly \cite{Kristaly-JMAA12}, i.e., $f:\mathbb{R}_+\to\mathbb{R}_+$, $f (s) = \min\{\max\{0, s-1\},a - 1\}$, where $a>1$. Then $$c_f=\frac{a-1}{a}\ \ \mbox{and}\ \ \lim_{q\to 0}c_F=\frac{a-1}{a+1}.$$ Thus, if the "impact" of the Maxwell equation is small ($q\to 0$), then the values $c_f$ and $c_F$ may be arbitrary close to each other.
\begin{remark}
	Typical examples for function $g$ can be:
	\begin{itemize}
		\item[(a)] $g(s)=-1$. In this case $c_f=\frac{a-1}{a}\ \ \mbox{and}\ \ \widehat{c}_F=\frac{a-1}{a+1}.$ 
		\item[(b)] $g(s)=\frac{1}{s}-2$. In this case $c_f =\frac{(a-1)^2}{a^2}\ \ \mbox{and}\ \ \widehat{c}_F=\frac{(a-1)^4}{a^2(a^2-2\ln a-1)}$.
	\end{itemize}
\end{remark}

\begin{proof}[Proof of Theorem \ref{thm:m-megoldas}]  We follow the idea presented in \cite{Kristaly-Studia}.	First we claim that the set of all global minima's of the functional
	$\mathcal{N}:H_{g}^{1}(M)\to\mathbb{R}$,
	\[
	\mathcal{N}(u)=\frac{1}{2}\|u\|_{\beta}^{2}-\mu_{0}\int_{M}\beta(x)F(u)\d
	\]
	has at least $m$ connected components in the weak topology on $H_{g}^{1}(M).$
	Indeed, for every $u\in H_{\beta}^{1}(M)$ one has 
	\begin{align*}
	\mathcal{N}(u) & =\frac{1}{2}\|u\|_{\beta}^{2}-\mu_{0}\int_{M}\beta(x)F(u)\d\\
	& =\frac{1}{2}\int_{M}|\nabla_{g}u|^{2}\d+\int_{M}\beta(x)\Phi_{\mu_{0}}(u)\d\\
	& \geq\|\beta\|_{L^1(M)}\inf_{t}\Phi_{\mu_{0}}(t).
	\end{align*}
	Moreover, if we consider $u=\tilde{t}$ for a.e. $x\in M$, where
	$\tilde{t}\in\mathbb{R}$ is the minimum point of the function $t\mapsto\Phi_{\mu_{0}}(t)$,
	then we have equality in the previous estimate. Thus,
	\[
	\inf_{u\in H_{\beta}^{1}(M)}\mathcal{\mathcal{N}}(u)=\|\beta\|_{L^1(M)}\inf_{t}\Phi_{\mu_{0}}(t).
	\]
	On the other hand if $u\in H_{g}^{1}(M)$ is not a constant function,
	then $|\nabla_{g}u|^{2}>0$ on a positive measure set in $M,$ i.e.,
	$$\mathcal{N}(u)>\|\beta\|_{L^1(M)}\inf_{t}\Phi_{\mu_{0}}(t).$$ Consequently,
	there is a one-to-one correspondence between the sets
	\[
	\mathrm{Min}(\mathcal{N})=\left\{ u\in H_{g}^{1}(M):\ \mathcal{N}(u)=\inf_{u\in H_{g}^{1}(M)}\mathcal{N}(u)\right\} 
	\]
	and
	\[
	\mathrm{Min}\left(\Phi_{\mu_{0}}\right)=\left\{ t\in\mathbb{R}:\ \Phi_{\mu_{0}}(t)=\inf_{t\in\mathbb{R}}\Phi_{\mu_{0}}(t)\right\} .
	\]
	Let $\xi$ be the function that associates to every $t\in\mathbb{R}$
	the equivalence class of those functions which are a.e. equal to $t$
	on the whole $M.$ Then $\xi:\mathrm{Min}(\mathcal{N})\to\mathrm{Min}\left(\Phi_{\mu_{0}}\right)$
	is actually a homeomorphism, where $\mathrm{Min}(\mathcal{N})$ is
	considered with the relativization of the weak topology on $H_{g}^{1}(M).$
	On account of $(f_{4})$, the set $\mathrm{Min}\left(\Phi_{\mu_{0}}\right)$
	has at least $m\geq2$ connected components. Therefore, the same is
	true for the set $\mathrm{Min(\mathcal{N})}$, which proves the claim. 
	
	Now we are in the position to apply Theorem \ref{Ricceri-m} with
	$H=H_{g}^{1}(M)$, $\mathcal{N}$ and
	\[
	{\displaystyle \mathcal{G}=\frac{1}{4}\int_{M}\phi_{u}u^{2}\d-\int_{M}\alpha(x)F(u)\d}.
	\]
	
	Now we prove that the functional $\mathcal{G}$ is sequentially weakly lower semicontinuous. To see this, it is enough to prove that the map $$H^1_\beta(M)\ni u\mapsto \int_M \phi_{u} u^2\d$$ is convex. To prove this, let us fix $u,v\in H^1_{\beta}(M)$ and $t,s\geq 0$ such that $t+s=1$.  Then we have that 
	\begin{align*}\mathscr{A}(\phi_{tu+sv}):=-\Delta_g \phi_{tu+sv}+\phi_{tu+sv} &=q(tu+sv)^2 \\
	&\leq q(tu^2+sv^2) \\ &= t(qu ^2)+s(qv^2) \\ &= t(-\Delta_g \phi_u+\phi_u)+s(-\Delta_g \phi_v+\phi_v) \\ &=\mathscr{A}(t\phi_u+s\phi_v).\end{align*}  Then using a comparison principle it follows that $$\phi_{tu+sv}\leq t\phi_u+s\phi_v.$$ Then multiplying the equations $-\Delta_g \phi_u+\phi_u=qu^2$ by $\phi_v$ and $-\Delta_g \phi_v+\phi_v=qv^2$ by $\phi_u$, after integration, we obtain that 
	\begin{equation}
	\label{fontosossz}
	\int\limits_M (\nabla_g \phi_u \nabla_g \phi_v+\phi_u\phi_v )\d=q\int\limits_M u^2 \phi_v \d=q\int\limits_M v^2 \phi_u \d.\end{equation}
	Thus, combining the above outcomes we have
	\begin{align*}
	\int\limits_M \phi_{tu+sv}(tu+sv)^2 dv_g &\leq \int\limits_M \left(t\phi_u+s\phi_v\right)\left(tu^2+sv^2\right)dv_g \\ &=
	t^2 \int\limits_M \phi_u u^2 dv_g+ts\int\limits_M\left(\phi_u v^2+\phi_v u^2\right)dv_g+s^2\int\limits_M \phi_v v^2 dv_g\\ &\overset{\eqref{fontosossz}}{=} \frac{t^2}{q}\left( \int\limits_M |\nabla_g \phi_u|^2dv_g+\int\limits_M \phi_u^2 dv_g \right)+ \frac{2ts}{q}\int\limits_M (\nabla_g \phi_u \nabla_g \phi_v+\phi_u\phi_v )dv_g \\ &+\frac{s^2}{q}\left( \int\limits_M |\nabla_g \phi_v|^2dv_g+\int\limits_M \phi_v^2 dv_g \right) \\&=\frac{1}{q}\int\limits_M (t\nabla_g \phi_u+s\nabla_g \phi_v )^2 dv_g+\frac{1}{q}\int\limits_M (t\phi_u+s\phi_v )^2dv_g\\ &\leq t\int\limits_M \phi_u u^2dv_g+s\int\limits_M \phi_v v^2 dv_g,
	\end{align*}
	which gives the required inequality, therefore it follows the required convexity. Almost the same way as in Lemma \ref{PS} we can prove that $\mathcal{N}+\lambda\mathcal{G}$ satisfies the Palais-Smaile condition for every $\lambda>0$ small enough. Therefore the functionals $\mathcal{N}$ and $\mathcal{G}$
	satisfies all the hypotheses of Theorem \ref{Ricceri-m}. Therefore
	for every $\ds\tau>\max\left\{0,\|\beta\|_{1}\inf_{t}\Phi_{\mu_{0}}(t)\right\}$
	there exists $\lambda_{\tau}>0$ such that for every $\lambda\in(0,\lambda_{\tau})$
	the problem $(\mathcal{SM}_{\lambda}^\lambda)$ has at least $m+1$ solutions. We know in addition that $m$ elements among the solutions belong to the set $\ds \mathcal{N}_{\nu_0}^{-1}((-\infty,\tau))$, which proves that $m$ solutions satisfy the inequality $$\frac{1}{2}\int_M \left(|\nabla_{g} u|^2+\beta(x)u^2\right)\d-\mu_{0}\int_M \beta(x)F(u)\d<\tau.$$ 
\end{proof}
\begin{remark} \noindent (a) Note that (\hyperref[f4-felt]{$f_4$}) implies that the function $t\mapsto \Phi_{\mu_{0}}(t)$ has at least $m-1$ local
maxima. Thus, the function $t\mapsto \mu_0f(t)$ has at least $2m-1$ fixed points. In particular, if for some $\lambda>0$ $$\Psi(x,\lambda)=\mu_{0}\beta(x), \mbox{ for every }x\in M,$$ then the problem $(\mathcal{SM}_{\lambda}^\lambda)$ has at least $2m - 1 \geq 3$ constant solutions. 
\\ \noindent (b) Using the abstract Theorem \ref{Ricceri-m}, one can guarantee that $\ds \tau>\max\left\{0,\|\beta\|_{L^1(M)}\inf_t\Phi_{\mu_{0}}(t)\right\}$ It is clear that the assumption \hyperref[f2-felt]{($f_2$)} holds if there exist $\nu\in (0,1)$ and $c>0$ such that $$|f(t)|\leq c|t|^\nu,\ \mbox{ for every }t\in \mathbb{R}.$$ In this case, $m$ weak solutions of the problem satisfy the inequality $$\frac{1}{2}\int_M \left(|\nabla_{g} u|^2+\beta(x)u^2\right)\d-\mu_{0}\int_M \beta(x)F(u)\d<\tau.$$ Now, it is clear that $$|F(t)|\leq \frac{c}{\nu+1}|t|^{\nu+1}, \ \mbox{ for every }t\in \mathbb{R}.$$ Using a H\"older inequality $$\int_M \beta(x) |u|^{\nu+1}\d\leq \|\beta\|_{L^1(M)}^{\frac{1-\nu}{2}}\|u\|_{H^1_\beta(M)}^{\nu+1}.$$ One can observe, that since $\tau>0$ the equation $$\frac{1}{2}t^2-\frac{\mu_0c\|\beta\|_{L^1{M}}^{\frac{1-\nu}{2}}}{\nu+1}t^{\nu+1}-\tau=0,$$ always has a positive solution. 

Summing up, the number $\|u\|_{H^1_\beta(M)}$ is less than the greatest solution of the previous algebraic equation. Combining this with \eqref{normakhasonlitasa}, we have that $$\|u\|_{H^1_g(M)}\leq \frac{t_*}{\min\{1,\min_M\sqrt{\beta}\}},$$ where $t_*$ the greatest solution of the previous algebraic equation. A similar study for Emden--Fowler equation was done by Krist\'aly and R\v adulescu, see \cite[Theorem 1.3]{Kristaly-Studia}.
\end{remark}

\subsection{Schr\"odinger-Maxwell systems involving superlinear nonlinearity}
In the sequel we prove Theorem \ref{thm:szuperlinear}. Recall that $\Psi(\lambda,x)=\lambda$ and $\beta\equiv 1$. The energy functional associated with the problem $(\mathcal{SM}_\lambda^e)$ is defined by $$\mathcal{E}_\lambda(u)=\frac{1}{2}\|u\|_{H^1_g(M)}^2+\frac{e}{4}\int_M \phi_u u^2\d-\lambda\int_{M} F(u)\d.$$
\begin{lemma}
	Every (PS) sequence for the functional $\mathcal{E}_{\lambda}$ is
	bounded in $H_{g}^{1}(M).$
\end{lemma}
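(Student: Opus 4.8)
The plan is to show that any Palais--Smale sequence $\{u_j\}_j\subset H^1_g(M)$ for $\mathcal{E}_\lambda$ is bounded, using the Ambrosetti--Rabinowitz condition $(\tilde f_2)$ in the standard way, but paying attention to the extra Maxwell term $\tfrac{e}{4}\int_M\phi_{u_j}u_j^2\,{\rm d}v_g$. First I would record that a (PS) sequence satisfies $\mathcal{E}_\lambda(u_j)\le C$ and $\mathcal{E}_\lambda'(u_j)(v)=o(\|v\|_{H^1_g(M)})$; testing the derivative formula \eqref{derivalt} with $v=u_j$ gives
\[
\mathcal{E}_\lambda'(u_j)(u_j)=\|u_j\|_{H^1_g(M)}^2+e\int_M\phi_{u_j}u_j^2\,{\rm d}v_g-\lambda\int_M f(u_j)u_j\,{\rm d}v_g=o(\|u_j\|_{H^1_g(M)}).
\]
The key algebraic step is to form the combination $\mathcal{E}_\lambda(u_j)-\tfrac1\eta\mathcal{E}_\lambda'(u_j)(u_j)$ with $\eta>4$ from $(\tilde f_2)$; the Schr\"odinger quadratic term contributes $(\tfrac12-\tfrac1\eta)\|u_j\|_{H^1_g(M)}^2$, the nonlinear term contributes $\lambda\int_M\bigl(\tfrac1\eta f(u_j)u_j-F(u_j)\bigr)\,{\rm d}v_g$ which is bounded below by $-\lambda c$ for some constant $c$ (split the integral over $\{|u_j|\le\tau_0\}$, where $f$ is bounded and the contribution is controlled by ${\rm Vol}_g M$, and over $\{|u_j|\ge\tau_0\}$, where $(\tilde f_2)$ makes the integrand nonnegative), and crucially the Maxwell term contributes $e\bigl(\tfrac14-\tfrac1\eta\bigr)\int_M\phi_{u_j}u_j^2\,{\rm d}v_g\ge 0$ because $\eta>4$ and $\phi_{u_j}\ge 0$.

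Putting these together yields
\[
C+o(\|u_j\|_{H^1_g(M)})\ \ge\ \mathcal{E}_\lambda(u_j)-\tfrac1\eta\mathcal{E}_\lambda'(u_j)(u_j)\ \ge\ \Bigl(\tfrac12-\tfrac1\eta\Bigr)\|u_j\|_{H^1_g(M)}^2-\lambda c,
\]
and since $\tfrac12-\tfrac1\eta>0$ (as $\eta>4>2$), this forces $\|u_j\|_{H^1_g(M)}^2\le C'+o(\|u_j\|_{H^1_g(M)})$, hence $\{u_j\}_j$ is bounded in $H^1_g(M)$. The main point to get right — and the only place the presence of the Maxwell coupling matters — is ensuring the sign of the $\phi_{u_j}u_j^2$ coefficient is favorable: one needs the Ambrosetti--Rabinowitz exponent $\eta$ to exceed $4$ (not merely $2$), which is exactly why $(\tilde f_2)$ is stated with $\eta>4$ rather than the classical $\eta>2$. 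The nonnegativity $\int_M\phi_{u}u^2\,{\rm d}v_g=\tfrac1q\|\phi_u\|_{H^1_g(M)}^2\ge0$ was already observed in the proof of Theorem \ref{thm:sublinear1}, so no new work is needed there. The rest is the routine (PS)-truncation estimate for $\frac1\eta f(s)s-F(s)$, which is bounded below uniformly by continuity of $f$ on the compact set $|s|\le\tau_0$ together with $(\tilde f_2)$ on its complement.
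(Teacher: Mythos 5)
Your argument is correct and is essentially the paper's own proof: both form $\mathcal{E}_\lambda(u_j)-\tfrac1\eta\mathcal{E}_\lambda'(u_j)(u_j)$, use $\eta>4$ to make the Maxwell term's coefficient nonnegative, and control $\tfrac1\eta f(s)s-F(s)$ on $\{|s|\le\tau_0\}$ by a constant times $\mathrm{Vol}_g(M)$. The only cosmetic difference is that the paper organizes the final step as a proof by contradiction (assume $\|u_j\|_{H^1_g(M)}\to\infty$ and divide by the norm), whereas you read off boundedness directly from $(\tfrac12-\tfrac1\eta)\|u_j\|_{H^1_g(M)}^2\le C'+o(\|u_j\|_{H^1_g(M)})$.
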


\begin{proof}
	We consider a Palais-Smale sequence $(u_{j})_{j}\subset H_{g}^{1}(M)$
	for $\mathcal{E}_{\lambda}$, i.e., $\{\mathcal{E}_{\lambda}(u_{j})\}$
	is bounded and $$\Vert(\mathcal{E}_{\lambda})^{\prime}(u_{j})\Vert_{H_{g}^{1}(M)^{\ast}}\rightarrow0\
	\mbox{as}\ j\rightarrow\infty.$$ We claim that $(u_{j})_{j}$ is bounded
	in $H_{g}^{1}(M).$ We argue by contradiction, so suppose the contrary.
	Passing to a subsequence if necessary, we may assume that
	\[
	\|u_{j}\|_{H_{g}^{1}(M)}\to\infty,\ \ \mbox{as }j\to\infty.
	\]
	It follows that there exists $j_{0}\in\mathbb{N}$ such that for every $j\geq j_0$ we have that
	\begin{align*}
	\mathcal{E}_{\lambda}(u_{j})-\frac{\left\langle \mathcal{E}_{\lambda}'(u_{j}),u_{j}\right\rangle }{\eta} &=\frac{1}{2}\left(\frac{\eta-2}{\eta}\right)\|u_{j}\|_{H_{g}^{1}(M)}^{2}+\frac{e}{4}\left(\frac{\eta-4}{\eta}\right)\int_{M}\phi_{u_{j}}u_{j}^{2}\d\\&+\lambda\int_{M}\left(\frac{f(u_{j})u_{j}}{\eta}-F(u_{j})\right)\d.
	\end{align*}
	Thus, bearing in mind that $\ds \int_M \phi_u u^2\d\geq 0$ and $(\hyperref[f222-felt]{\tilde{f}_{2}})$ one has that 
	\[
	\frac{1}{2}\left(\frac{\eta-2}{\eta}\right)\|u_{j}\|_{H_{g}^{1}(M)}^{2}\leq\mathcal{E}_{\lambda}(u_{j})-\frac{\left\langle \mathcal{E}_{\lambda}'(u_{j}),u_{j}\right\rangle }{\eta}+\chi\mathrm{Vol_{g}}(M),
	\]
	where 
	\[
	\chi=\sup\left\{ \left|\frac{tf(t)}{\eta}-F(t)\right|:t\leq\tau_{0}\right\} .
	\]
	Therefore, for every $j\geq j_0$ we have that
	\[
	\frac{1}{2}\left(\frac{\eta-2}{\eta}\right)\|u_{j}\|_{H_{g}^{1}(M)}^{2}\leq\mathcal{E}_{\lambda}(u_{j})+\frac{1}{\eta}\Vert(\mathcal{E}_{\lambda})^{\prime}(u_{j})\Vert_{H_{g}^{1}*}\|u_{j}\|_{H_{g}^{1}(M)}+\chi\mathrm{Vol_{g}}(M).
	\]
	Dividing by $\|u_{j}\|_{H_{g}^{1}(M)}$ and letting $j\to\infty$ we get a contradiction,
	which implies the boundedness of the sequence $\{u_{j}\}_{j}$ in
	$H_{g}^{1}(M).$ 
\end{proof}
\begin{proof}[Proof of the Theorem \ref{thm:szuperlinear}]
	~ Let us consider as before the following functionals: 
	\[
	\mathscr{H}(u)=\frac{1}{2}||u||_{H_{g}^{1}(M)}^{2}+\frac{e}{4}\int_{M}\phi_{u}u^{2}\d\mbox{ and }\mathcal{F}(u)=\int_{M}F(u)\d.
	\]
	Form the positivity and the convexity of functional $u\mapsto\ds \int_M \phi_{u} u^2$ it follows that the functional $\mathscr{H}$ is sequentially weakly semicontinuous and coercive functional. It is also clear that $\mathcal{F}$ is sequentially weakly continuous.
	Then, for $\mu=\frac{1}{2\lambda},$ we define the functional $J_{\mu}(u)=\mu\mathscr{H}(u)-\mathcal{F}(u).$
	Integrating, we get from $(\hyperref[f222-felt]{\tilde{f}_{2}})$ that, $$F(ts)\geq t^{\eta}F(s), \ \ t\geq1 \mbox{ and }|s|\geq \tau_0 .$$ Now, let us consider a fixed function
	$u_{0}\in H_{g}^{1}(M)$ such that $$\mathrm{Vol_{g}}\left(\{x\in M: |u_0(x)|\geq \tau_0\}\right)>0,$$ and using the previous inequality and the fact that $\phi_{tu}=t^2\phi_u$, we have
	that: 
	\begin{align*}
	J_{\mu}(tu_{0}) & =\mu\mathscr{H}(tu_{0})-\mathcal{F}(tu_{0})\\
	& =\mu\frac{t^{2}}{2}||u_{0}||_{H_{g}^{1}(M)}^{2}+\mu\frac{e}{4}t^{4}\int_{M}\phi_{u_{0}}u_{0}^{2}-\int_{M}F(tu_{0})\\
	& \leq\mu t^{2}||u_{0}||_{H_{g}^{1}(M)}^{2}+\mu\frac{e}{2}t^{4}\int_{M}\phi_{u_{0}}u_{0}^{2}-t^{\eta}\int_{\{x\in M::|u_{0}|\geq\tau_{0}\}}F(u_{0})+\chi_{2}\mathrm{Vol}_{g}(M)\overset{\eta>4}{\rightarrow}-\infty,
	\end{align*}
	as $t\to\infty$, where 
	\[
	\chi_{2}=\sup\left\{ |F(t)|:|t|\leq\tau_{0}\right\} .
	\]
	Thus, the functional $J_{\mu}$ is unbounded from below. A similar
	argument as before shows that (taking eventually a subsequence), one
	has that the functional $J_{\mu}$ satisfies the (PS) condition. 
	
	Let us denote by $K_{\tau}=\left\{ x\in M:\|u\|_{H_{g}^{1}(M)}^{2}<\tau\right\} $
	and by 
	\[
	h(\tau)=\inf_{u\in K_{\tau}}\frac{\ds \sup_{v\in K_{\tau}}\mathcal{F}(v)-\mathcal{F}(u)}{\tau-\mathscr{H}(u)}
	\]
	Since $0\in K_{\tau}$, we have that 
	\[
	h(\tau)\leq\frac{\sup_{v\in K_{\tau}}\mathcal{F}(v)}{\tau}.
	\]
	On the other hand bearing in mind the assumption $(\hyperref[f333-felt]{\tilde{f}_{1}})$,
	we have that, 
	\[
	\mathcal{F}(v)\leq C\|v\|_{H_{g}^{1}(M)}+\frac{C}{p}\kappa_{p}^{p}\|v\|_{H_{g}^{1}(M)}^{p}.
	\]
	Therefore 
	\[
	h(\tau)\leq\frac{C}{2}\tau^{\frac{1}{2}}+\frac{C\kappa_{p}^{p}}{p}\tau^{\frac{p-2}{2}}.
	\]
	Thus, if $$\lambda<
	\lambda_0:=\frac{p\tau^{\frac{1}{2}}}{2pC+2C\kappa_{p}^p\tau^{\frac{p-1}{2}}}$$ one has $\mu=\frac{1}{2\lambda}>h(\tau)$.
	Therefore, we are in the position to apply Ricceri's result, i.e.,
	Theorem \ref{Ricceritetel}, which concludes our proof. 
\end{proof}
\begin{remark}
	From the proof of Theorem \ref{thm:szuperlinear} one can see that $$\lambda_0\leq \frac{p}{2C}\max_{\tau>0}\frac{\tau^{\frac{1}{2}}}{p+\kappa_{p}^p\tau^{\frac{p-1}{2}}}.$$ Since $p>2$, one can see that, $\ds \max_{\tau>0}\frac{p\tau^{\frac{1}{2}}}{2pC+2C\kappa_{p}^p\tau^{\frac{p-1}{2}}}<\infty$.
\end{remark}
\subsection*{Acknowledgment} The author would like to express his gratitude to Professor Alexandru Krist\'aly for his useful comments and remarks. I am also very grateful to the anonymous Referee, for his/her thorough review and highly appreciate the comments and suggestions, which significantly contributed to improving the quality of the manuscript. Research supported by a grant KPI/IPC, Grant No. 13/13/17 May 2017.
%
%\bibliographystyle{amsalpha}
%\bibliography{referenciak}
%
%

\end{document}